\documentclass[a4paper,12pt,reqno]{article}

\usepackage{amsthm}
\usepackage{amsmath,amssymb,latexsym,amsfonts,mathrsfs}
\usepackage[dvipdfmx]{graphicx}
\usepackage{bm}

\usepackage{color}
\usepackage{comment}
\usepackage{mathtools}
\usepackage{fancyhdr}
\usepackage[top=30truemm,bottom=30truemm,left=20truemm,right=20truemm]{geometry}
\usepackage{enumitem}
\usepackage{enumerate}

\newcommand{\R}{\mathbb{R}}
\newcommand{\C}{\mathbb{C}}

\newcommand{\e}{\varepsilon}

\newcommand{\SCR}[1]{{\mathscr #1}}

\newcommand{\CAL}[1]{{\cal #1}}

\newcommand{\J}[1]{\left\langle #1 \right\rangle}

\theoremstyle{plain}
\newtheorem{Thm}{{\bf Theorem}}[section]

\newtheorem{Lem}[Thm]{{\bf Lemma}}
\newtheorem{Prop}[Thm]{{\bf Proposition}}

\theoremstyle{definition}
\newtheorem{Def}[Thm]{{\bf Definition}}
\newtheorem{Rem}[Thm]{{\bf Remark}}

\newcounter{Exami}

\allowdisplaybreaks[2]

\makeatletter
 \def\address#1#2{\begingroup
 \noindent\parbox[t]{7.8cm}{%
 \small{\scshape\ignorespaces#1}\par\vskip0ex
 \noindent\small{\itshape E-mail}%
 \/: #2\par\vskip4ex}\hfill%
 \endgroup}%
\makeatother

\begin{document}
\fontencoding{T1}\selectfont
%%%%%%%%%%%%%%%%%%%%%%%%%%%%%%%%%%%%%%%%%%%%%%%%%%%%%%%%%%%%%%% 
\begin{comment}
\end{comment}
%%%%%%%%%%%%%%%%%%%%%%%%%%%%%%%%%%%%%%%%%%%%%%%%%%%%%%%%%%%%%%%  
\title{Sharp asymptotic behavior of solutions to damped nonlinear Schr\"{o}dinger equations}
\author{Kodai Takagi, Shun Takizawa}
\date{\today}
\maketitle
\begin{abstract}
We consider large time asymptotics for damped nonlinear Schr\"{o}dinger equations. It is known 
that the nonlinear solution asymptotically behaves like a linear solution when time $t$ tends to infinity in the energy space. 
We prove that its convergence rate can be refined and the obtained rate is sharp if initial data belong to certain function spaces. This result partially solves open problems concerning the optimal decay rate of scattering.
\end{abstract}
%%%%%%%%%%%%%%%%%%%%%%%%%%%%%%%%%%%%%%%%%%%%%%%%%%%%%%%%%%%%%%%%%%%%%%%%%%%%%%%%%%%%%
%%%%%%%%%%%%%%%%%%%%%%%%%%     セクション１（イントロ）      %%%%%%%%%%%%%%%%%%%%%%%%%%%%%%%%%%%%%
%%%%%%%%%%%%%%%%%%%%%%%%%%%%%%%%%%%%%%%%%%%%%%%%%%%%%%%%%%%%%%%%%%%%%%%%%%%%%%%%%%
\section{Introduction} 
In this paper, we study the following Cauchy problem for the damped nonlinear Schr\"{o}dinger equation:
\begin{align}\label{CP1}
\begin{cases}
&i\partial_{t}u+\Delta u+ia u=\mu |u|^{p-1} u, \hspace{3mm}(t,x)\in [0,\infty) \times \R^n,
\\&u|_{t=0}=u_{0},
\end{cases}
\end{align}
where $a>0, \mu=\pm 1$ and $p>1$. The damped nonlinear Schr\"{o}dinger equation
appears in various areas of nonlinear optics, plasma physics, and fluid
dynamics, and has been studied in both mathematical and physical viewpoints. 
See \cite{Goldman, MTsutsumi1, MTsutsumi2, Fibich, Ohta-Todorova, Mohamad, Hamouda}.

There is a large amount of literature on the asymptotic behavior for (\ref{CP1}) without damping (i.e. $a=0$).
Regarding $L^2$-scattering we refer to \cite{Tsutsumi-Yajima, Ozawa, Barab, Hayashi, Kita} and references therein. Concerning $H^1$-scattering we refer to \cite{YTsutsumi, Ginibre-Velo, Nakanishi, Bourgain, Dodson, BGTV} and bibliographies therein.

On the other hand, the asymptotic behavior for (\ref{CP1}) was first studied by Inui \cite{Inui}.
He proved that if  there exists a $\phi\in H^1$ such that $\|e^{at}u(t)-e^{it\Delta}\phi\|_{H^1}\to0$ as $t\to\infty$  for the solution $u\in C([0,\infty); H^1)$ with $u_0\in H^1$, then its convergence rate is obtained as follows:
\begin{align} \label{Inq1}
\|u(t)-e^{-at}e^{it\Delta}\phi\|_{H^1}= 
\begin{cases}
&o\left(t^{(p-1)\alpha+\e}e^{-apt}\right) \hspace{5mm}\text{if}\hspace{3mm}1< p<1+\frac{4}{n},
\\
&o\left(e^{-apt}\right)\hspace{20mm}\text{if}\hspace{3mm}1+\frac{4}{n}\leq p<1+\frac{4}{n-2},
\end{cases}
\end{align} 
for any $\e>0$ and $\alpha=\frac{4-(n-2)(p-1)}{2(p-1)(p+1)}>\frac{n}{2(n+2)}$.
Subsequently Aloui-Jbari-Tayachi \cite{Aloui} improved this result. More precisely, they proved
\begin{align} \label{Inq2}
\|u(t)-e^{-at}e^{it\Delta}\phi\|_{H^1}=o\left(e^{-apt}\right)
\end{align} 
with $1<p<1+4/n$. The asymptotic estimates (\ref{Inq1}) and (\ref{Inq2}) imply that there is some room for additional decay.  

The aim of our study is to specify this additional decay rate precisely.
In the present paper, we prove that the convergence rate of scattering is refined and its rate is optimal if initial data belong to the modulation space $M^{1,1}$ or the weighted Sobolev space $\Sigma=H^{[n/2]+1}\cap \SCR{F}H^{[n/2]+1}$, whose  definitions are given in Section 2. 
Here $[x]$ denotes the greatest integer not greater than $x$. We first state the result on $L^2$-scattering.
\begin{Thm} \label{thm2}
Let $u_0\in M^{1,1}$ and the power $p$ be an odd integer. Suppose that the solution $u\in C([0,\infty); M^{1,1})$ of (\ref{CP1}) such that 
\begin{align} \label{Asmp1}
\|u(t)\|_{M^{1,1}}\leq C e^{-\frac{ap+\e}{2p-1}t}, \hspace{3mm}t>0
\end{align} 
with some $C>0$ and small $\e>0$.
Then, there exist unique $\phi \in M^{1,1}$ and constants $C_1, C_2>0$ and large $T>0$ such that 
\begin{align} \label{MainInq2}
C_{1} t^{-n(p-1)/2} e^{-apt} \leq \|u(t)-e^{-at}e^{it\Delta}\phi\|_{L^2}\leq C_{2} t^{-n(p-1)/2} e^{-apt}
\end{align}
for all $t>T$.
\end{Thm}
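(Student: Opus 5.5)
Set $v(t)=e^{at}u(t)$. A direct computation from (\ref{CP1}) gives
\[
i\partial_t v+\Delta v=\mu e^{-a(p-1)t}|v|^{p-1}v .
\]
The plan is to work with the Duhamel formula in the interaction picture. Define
\[
\phi:=u_0-i\mu\int_0^\infty e^{-is\Delta}e^{-a(p-1)s}|v|^{p-1}v(s)\,ds .
\]
Then
\[
u(t)-e^{-at}e^{it\Delta}\phi=i\mu e^{-at}\int_t^\infty e^{i(t-s)\Delta}e^{-a(p-1)s}|v|^{p-1}v(s)\,ds .
\]
Since $p$ is odd, $|v|^{p-1}v=v^{(p+1)/2}\bar v^{(p-1)/2}$ is a polynomial in $v$ and $\bar v$. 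Hence $M^{1,1}$, being an algebra, controls it: $\|u\|_{M^{1,1}}^p$ bounds the nonlinearity. The assumption (\ref{Asmp1}) implies $\|v(s)\|_{M^{1,1}}\le Ce^{a(p-1)s/(2p-1)-\e s/(2p-1)}$. Consequently $e^{-a(p-1)s}\|v\|_{M^{1,1}}^p$ is integrable, because $-a(p-1)+p\,a(p-1)/(2p-1)<0$. This gives existence of $\phi\in M^{1,1}$, and uniqueness follows from $e^{at}\|u-e^{-at}e^{it\Delta}\phi\|\to0$. The first step is therefore to bootstrap the growth bound on $v$. Plugging the integrability back into Duhamel yields $\sup_t\|v(t)\|_{M^{1,1}}<\infty$, since $e^{it\Delta}$ is bounded on $M^{1,1}$ only up to polynomial growth. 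To be safe I would rather use $\|\hat v(t)\|_{L^1}=\|\widehat{e^{-it\Delta}v}\|_{L^1}$, which is $e^{it\Delta}$-invariant and dominated by the $M^{1,1}$ norm. This gives $v$ bounded in $\SCR{F}L^1$ and hence in $L^\infty$ with the dispersive gain from $M^{1,1}$ data.

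The second step is the dispersive decay of the linear part. Set $w(s)=e^{is\Delta}\phi$. Since $\phi\in M^{1,1}\subset L^1$ (because $M^{1,1}\hookrightarrow \SCR{F}L^1\cap L^1$), we get $\|w(s)\|_{L^\infty}\le Cs^{-n/2}$. The refined statement is the standard asymptotic formula
\[
w(s)=(4\pi i s)^{-n/2}e^{i|x|^2/4s}\hat\phi(x/2s)+R(s),
\]
with $\|R(s)\|_{L^2}$ small compared with the main term. I would also show that $v(s)-w(s)$ is exponentially small in $L^2\cap L^\infty$ (by the first step it is $O(e^{-a(p-1)s})$ in $\SCR{F}L^1$). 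Then in the tail integral we may replace $v$ by $w$, at the cost of errors of order $e^{-a(p-1)s}\cdot e^{-a(p-1)s}$, which are negligible.

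The third step handles the main term. We need the two-sided bound
\[
\Bigl\|\int_t^\infty e^{-is\Delta}e^{-a(p-1)s}|w|^{p-1}w(s)\,ds\Bigr\|_{L^2}\sim t^{-n(p-1)/2}e^{-a(p-1)t}.
\]
The upper bound is immediate from $\||w|^{p-1}w\|_{L^2}\le\|w\|_{L^\infty}^{p-1}\|\phi\|_{L^2}\lesssim s^{-n(p-1)/2}$, integrated against $e^{-a(p-1)s}$. For the lower bound, the exponential weight concentrates the integral on $s\in[t,t+O(1)]$. On that window, $|w(s)|^{p-1}w(s)$ is close to $|w(t)|^{p-1}w(t)$, because $\|w(s)-w(t)\|$ is controlled by $|s-t|\,\|\Delta\phi\|$, or more precisely using the profile formula. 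Similarly $e^{-is\Delta}\approx e^{-it\Delta}$. We then get
\[
\Bigl\|\int_t^\infty\cdots\Bigr\|_{L^2}\ge \frac{e^{-a(p-1)t}}{a(p-1)}\,\||w(t)|^{p-1}w(t)\|_{L^2}-(\text{error}).
\]
By the profile formula and a change of variables, $\||w(t)|^{p-1}w(t)\|_{L^2}= c\,t^{-n(p-1)/2}\,\||\hat\phi|^{p-1}\hat\phi\|_{L^2}(1+o(1))$. This is nonzero because $\phi\ne0$; the case $\phi=0$ forces $u\equiv0$ and must be excluded or treated separately.

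The main obstacle is making the lower bound rigorous. The closeness $e^{-is\Delta}|w|^{p-1}w(s)\approx e^{-it\Delta}|w|^{p-1}w(t)$ must hold with relative error $o(1)$ in the right norm. Differentiating in $s$ gives a derivative involving $\Delta(|w|^{p-1}w)$, which decays like $s^{-n(p-1)/2}$ times derivatives of the profile. Controlling this needs $\hat\phi$ regular, i.e. weighted information on $\phi$. I would first try to estimate it by the interaction-picture derivative $\partial_s e^{-is\Delta}(|w|^{p-1}w)$ via the pseudo-conformal factorization $e^{is\Delta}=M D \SCR{F} M$. Alternatively, to avoid regularity, I would choose $T$ large and use only the window $[t,t+\delta]$ with $\delta$ small but fixed, bounding the complement crudely.
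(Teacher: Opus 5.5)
\textbf{There is a genuine gap in the lower bound.}

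Your Steps 1--2 and the upper bound are fine. You were right to drop the claim $\sup_t\|v(t)\|_{M^{1,1}}<\infty$: it is false, since $\|e^{it\Delta}u_0\|_{M^{1,1}}$ can grow like $t^{n/2}$. The Wiener-algebra bound on $\|\widehat{v}(t)\|_{L^1}$ is a valid and somewhat cleaner substitute for the paper's estimate of $R_2$, and your profile replacement $w(s)\approx M(s)D(s)\widehat\phi$ is the paper's $R_1$. Your remark that $\phi=0$ must be excluded is also correct. For $u_0=0$ the lower bound in (\ref{MainInq2}) fails, and the paper does not mention this.

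The gap is the lower bound, which is the actual content of the theorem; neither of your two routes closes it as stated.

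\emph{The window argument.} On $[t,t+O(1)]$ the approximations $F(w(s))\approx F(w(t))$ and $e^{-is\Delta}\approx e^{-it\Delta}$ each carry a relative error that does not tend to zero unless $|s-t|\to0$. Indeed $w(s)=e^{i(s-t)\Delta}w(t)$, and $M(s)\overline{M(t)}=e^{-i|x|^2(s-t)/(4st)}$ is not close to $1$ on the region $|x|\sim t$ where $w$ lives. Besides, $\|\Delta\phi\|_{L^2}$ may be infinite for $\phi\in M^{1,1}$ (Proposition \ref{Hougan}).

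\emph{The short-window fallback.} This fails quantitatively. The interval $[t,t+\delta]$ carries only the fraction $1-e^{-a(p-1)\delta}$ of the weight $\int_t^\infty e^{-a(p-1)s}s^{-n(p-1)/2}ds$. So even a triangle-inequality bound on $\int_{t+\delta}^\infty$ that uses the exact asymptotics of $\|F(w(s))\|_{L^2}$ exceeds the window's contribution unless $\delta>\log 2/(a(p-1))$, i.e.\ unless the window is not short.

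\textbf{The fix.} Your first route, via the factorization, is the right one. The regularity obstruction you worry about disappears because only an $o(1)$ statement is needed, not a rate. After replacing $w(s)$ by $M(s)D(s)\widehat\phi$, use gauge invariance and the identity $e^{-is\Delta}M(s)D(s)h=M(-s)\SCR{F}^{-1}h$. With $g=\SCR{F}^{-1}F(\widehat\phi)\in L^2$ and the constant $c=2^{-n(p-1)/2}$, this gives
\begin{align*}
e^{i(t-s)\Delta}F(M(s)D(s)\widehat\phi)=c\,s^{-n(p-1)/2}\bigl[M(t)D(t)F(\widehat\phi)+M(t)D(t)\SCR{F}\{M(t)M(-s)-1\}g\bigr].
\end{align*}
The first term has a direction independent of $s$. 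Hence its integral against $e^{-a(p-1)s}$ has norm exactly $c\int_t^\infty e^{-a(p-1)s}s^{-n(p-1)/2}ds\,\|F(\widehat\phi)\|_{L^2}$, which Lemma \ref{ElemLem} bounds below. The two $O(1)$ errors you were fighting cancel identically in this expression.

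For the second term, note that $|M(t)M(-s)-1|\le\min\{2,|x|^2/(4t)\}$ for all $s\ge t$. By dominated convergence, using only $g\in L^2$, this gives $\sup_{s\ge t}\|(M(t)M(-s)-1)g\|_{L^2}\to0$ as $t\to\infty$. (The paper instead approximates $g$ by a Schwartz function $\varphi$ and uses $\|\varphi\|_{\SCR{F}H^2}$.)

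This is the paper's $I_2+R_3$ split. It works over the whole half-line $[t,\infty)$, with no window and no regularity of $\widehat\phi$. Combined with your $R_1$- and $R_2$-type replacements, it yields the lower bound.
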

The space $M^{1,1}$ has low regularity in general. In other words, $M^{1,1}(\R^n) \not\subset H^1(\R^n)$ holds, for which a concrete example is given in Proposition \ref{Hougan}. To the best of our knowledge, Theorem \ref{thm2} is the first result on sharp asymptotics for (\ref{CP1}) with rough data both with and without damping. 

Next, we can deal with energy scattering by employing another space instead of $M^{1,1}$. 
\begin{Thm} \label{thm1}
Let $u_0\in \Sigma$ and  $p$ be an odd integer or $p>[n/2]+1$. Assume that the solution $u\in C([0,\infty); \Sigma)$ of (\ref{CP1}) such that 
\begin{align} \label{Asmp2}
\|e^{-it\Delta}u(t)\|_{\Sigma}\leq C e^{-at}, \hspace{3mm}t>0
\end{align} 
with some $C>0$.
Then, there exist unique $\phi \in \Sigma$ and constants $C_1, C_2>0$ and large $T>0$ such that 
\begin{align} \label{MainInq1}
C_{1} t^{-n(p-1)/2} e^{-apt} \leq \|u(t)-e^{-at}e^{it\Delta}\phi\|_{H^{1}}\leq C_{2} t^{-n(p-1)/2} e^{-apt}
\end{align}
for all $t>T$.
\end{Thm}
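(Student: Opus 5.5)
Write $v(t)=e^{at}u(t)$, so that $i\partial_t v+\Delta v=\mu e^{-a(p-1)t}|v|^{p-1}v$. Duhamel's formula in the interaction picture gives
\begin{align*}
e^{-it\Delta}v(t)=u_0-i\mu\int_0^t e^{-a(p-1)s}e^{-is\Delta}\bigl(|v|^{p-1}v\bigr)(s)\,ds .
\end{align*}
I define $\phi$ as the limit $t\to\infty$, that is $\phi=u_0-i\mu\int_0^\infty(\cdots)\,ds$. The integral converges in $\Sigma$: by the time-dependent Leibniz/Sobolev estimates for $J=x+2it\nabla$, the assumption (\ref{Asmp2}), i.e.\ $\|e^{-it\Delta}v\|_\Sigma\le C$, gives $\|e^{-is\Delta}(|v|^{p-1}v)\|_\Sigma\lesssim \langle s\rangle^{N}$ for some finite $N$. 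The $\langle s\rangle^{N}$ growth is absorbed by $e^{-a(p-1)s}$. Here $p$ odd or $p>[n/2]+1$ is used to differentiate $|v|^{p-1}v$ about $[n/2]+1$ times. Uniqueness of $\phi$ is immediate, since two such $\phi$ would differ by $\|e^{it\Delta}(\phi-\phi')\|_{H^1}=o(1)$. Then
\begin{align*}
u(t)-e^{-at}e^{it\Delta}\phi=i\mu e^{-at}\int_t^\infty e^{-a(p-1)s}e^{i(t-s)\Delta}\bigl(|v|^{p-1}v\bigr)(s)\,ds ,
\end{align*}
and the $H^1$ norm of the right side equals $e^{-at}\bigl\|\int_t^\infty e^{-a(p-1)s}e^{-is\Delta}F(s)\,ds\bigr\|_{H^1}$, where $F=|v|^{p-1}v$.

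\textbf{Upper bound.} I use the factorization $e^{it\Delta}=M_tD_t\mathscr F M_t$ to write $v(s)=M_sD_s\hat w(s)+R(s)$ with $w(s)=e^{-is\Delta}v(s)$. The remainder satisfies $\|R(s)\|_{L^\infty}\lesssim s^{-n/2-\theta}\|w\|_\Sigma$, and the main term $M_sD_s\hat w$ has sup norm $\sim s^{-n/2}$. This gives $\|F(s)\|_{H^1}\lesssim s^{-n(p-1)/2}\|v\|_{L^\infty}^0\cdots$, i.e.\ $\|F(s)\|_{H^1}\lesssim s^{-n(p-1)/2}$ after using $\|v\|_{W^{1,\infty}}^{p-1}\lesssim s^{-n(p-1)/2}$ and $\|v\|_{H^1}\le C$. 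Integrating,
\begin{align*}
\int_t^\infty e^{-a(p-1)s}s^{-n(p-1)/2}\,ds\sim \frac{1}{a(p-1)}\,t^{-n(p-1)/2}e^{-a(p-1)t},
\end{align*}
so the $H^1$ norm is at most $C_2t^{-n(p-1)/2}e^{-apt}$.

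\textbf{Lower bound (main obstacle).} It suffices to bound the $L^2$ norm from below, since the $H^1$ norm dominates it. I write $e^{-is\Delta}F(s)=G(s)+E(s)$. The leading part is $G(s)$: the pseudo-conformal factorization gives $e^{-is\Delta}F(s)\approx \mathscr F^{-1}M_s^{-1}\bigl[\,\cdots\bigr]$, and gauge invariance makes the phases of $M_sD_s\hat w$ cancel in $|v|^{p-1}v$. This gives
\begin{align*}
F(s)\approx (2is)^{-n(p-1)/2}\,M_sD_s\bigl(|\hat w|^{p-1}\hat w\bigr)(s),
\end{align*}
and hence
\begin{align*}
G(s)=(2is)^{-n(p-1)/2}\,\overline{\mathscr F}^{-1}\!\left[M_{s}^{-1}\text{-corrected}\right]\bigl(|\hat w|^{p-1}\hat w\bigr)\approx s^{-n(p-1)/2}\,\mathscr F^{-1}\bigl(|\hat\phi|^{p-1}\hat\phi\bigr)+o\bigl(s^{-n(p-1)/2}\bigr),
\end{align*}
using $w(s)\to\phi$ in $\Sigma$ and $\|M_s-1\|\to0$ on weighted spaces. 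The errors $E(s)$ are $O(s^{-n(p-1)/2-\theta})$ in $L^2$.

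Since the phase factor $i^{-n(p-1)/2}$ is constant in $s$, integrating against $e^{-a(p-1)s}$ gives
\begin{align*}
\Bigl\|\int_t^\infty\cdots\Bigr\|_{L^2}=\frac{t^{-n(p-1)/2}e^{-a(p-1)t}}{a(p-1)}\bigl(\||\hat\phi|^{p-1}\hat\phi\|_{L^2}+o(1)\bigr).
\end{align*}
This is bounded below once $\phi\neq0$. Nontriviality of $\phi$ must come from the hypotheses, e.g.\ one assumes or checks $u_0\neq0$, which gives $\phi\ne0$ since the equation preserves the nonvanishing of solutions. The delicate part is controlling the errors uniformly, namely the convergence rate of $w(s)\to\phi$ and the $M_s$-commutator terms, at the $o(s^{-n(p-1)/2})$ level in $L^2$. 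This is where the full weight $\Sigma=H^{[n/2]+1}\cap\mathscr FH^{[n/2]+1}$ is needed, because it yields $L^\infty$ control of $\hat w$ and its derivatives.
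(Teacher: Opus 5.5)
Your proposal is correct and follows essentially the paper's route. The upper bound comes from $\|v(s)\|_{L^\infty}\lesssim s^{-n/2}$ together with a chain-rule (Kato--Ponce) bound for $\|F(v(s))\|_{H^1}$. The lower bound comes from the factorization $e^{it\Delta}=M(t)D(t)\mathscr{F}M(t)$ and gauge invariance, which isolate the leading term $s^{-n(p-1)/2}\mathscr{F}^{-1}F(\widehat{\phi})$; expanding around the running profile $\widehat{w}(s)$ rather than $\widehat{\phi}$ only regroups the paper's remainders $R_1,R_2,R_3$.

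Two remarks. First, the decay $\|v(s)\|_{W^{1,\infty}}\lesssim s^{-n/2}$ you invoke does not follow from the $\Sigma$ bound, but you do not need it: $|\nabla F(v)|\lesssim |v|^{p-1}|\nabla v|$ already gives $\|F(v)\|_{H^1}\lesssim \|v\|_{L^\infty}^{p-1}\|v\|_{H^1}$. Second, you are right that the lower bound requires $\phi\neq 0$, which the paper uses without comment when bounding $\|I_2(t)\|_{L^2}$ from below. This holds exactly when $u_0\neq 0$, because gauge invariance gives $\|v(t)\|_{L^2}=\|u_0\|_{L^2}$ and hence $\|\phi\|_{L^2}=\|u_0\|_{L^2}$.
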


Theorems \ref{thm2}, \ref{thm1} partially solve the open problem (see \cite[p.772]{Inui}) regarding the optimality of convergence rate to scattering. 

\begin{Rem}
The linear solution corresponding to (\ref{CP1}) consists of the dissipative part and the dispersive part: $u(t)=e^{-at} e^{it\Delta} u_0$. The exponential decay $e^{-apt}$ appeared in (\ref{Inq1})-(\ref{MainInq2}) arises from the dissipative part. On the other hand, the additional decay $t^{-n(p-1)/2}$ appeared in (\ref{MainInq2}), (\ref{MainInq1}) comes from the dispersive part.
\end{Rem}
We remark on the assumptions in our results.
\begin{Rem}
The conditions on the power in Theorems \ref{thm2} and \ref{thm1} are required in order to use the algebra property of $M^{1,1}$ (see Proposition \ref{PropM}) and the Kato-Ponce type inequality (see Proposition \ref{PropKP}), respectively. Note that no restriction is imposed on the size of the initial data although the decay conditions for the solutions are imposed.
The assumptions (\ref{Asmp1}), (\ref{Asmp2}) are satisfied at least for small data (see Propositions \ref{SDGE1} and \ref{SDGE2}).
\end{Rem}

\subsection*{Notation}
We write $F(u)=\mu|u|^{p-1}u$ by using $F: \C\to \C$ with $F(z)=\mu|z|^{p-1}z$. 
Let $\J{x}=(1+|x|^2)^{1/2}$.
We say that $u$ is a solution to (\ref{CP1}) if it satisfies the corresponding integral equation:
\begin{align*}
u(t)=e^{-at}e^{it\Delta}u_0-i\int_{0}^{t}e^{-a(t-s)} e^{i(t-s)\Delta} F(u(s))ds.
\end{align*} 
Such a solution is called for a mild solution.
We write
\begin{align*}
&\SCR{F}f(\xi)=\widehat{f}(\xi)=(2\pi)^{-n/2}\int_{\R^n}f(x)e^{-ix\cdot\xi}dx,
\\
&\SCR{F}^{-1}f(x)=\check{f}(x)=(2\pi)^{-n/2} \int_{\R^n}f(\xi)e^{ix\cdot\xi}d\xi
\end{align*} 
for the Fourier transform and the inverse Fourier transform of $f$, respectively. 
We define the free Schr\"{o}dinger propagator $e^{it\Delta}$ by
\begin{align*}
e^{it\Delta}f(x)=\SCR{F}^{-1} e^{-it|\xi|^2} \SCR{F}f(x)=(4\pi it)^{-n/2}\int_{\R^n} e^{i\frac{(x-y)^2}{4t}} f(y) dy.
\end{align*} 
We often use the notation $X\lesssim Y$ in the proofs if $X\leq CY$ with some constant $C>0$ depending only on the space dimension $n$, the damping parameter $a$ and the power $p$ of the nonlinearity.
\subsection*{Strategy of the proofs}
A key tool of our proofs is the following MDFM (or Dollard) decomposition formula:
\begin{align*}
e^{it\Delta}=M(t) D(t) \SCR{F} M(t),
\end{align*}
where $M(t)=e^{i|x|^2/(4t)}$ is the multiplier and $D(t)$ is the dilation which leaves the $L^2$-norm invariant by $D(t)f(x)=(2it)^{-n/2}f(\frac{x}{2t})$. This tool plays an important role to study sharp asymptotics for NLS without damping (\cite{Hayashi-Kawahara, Kita, Kita-Shimomura, Kita-Ozawa, Kita-Wada}) and we are inspired by methods developed in these works.

 When the solution $u(t)$ scatters to a linear solution $e^{-at}e^{it\Delta}\phi$, we have the Duhamel formula
\begin{align*}
u(t)-e^{-at}e^{it\Delta}\phi=\int_{t}^{\infty} e^{-a(t-s)} e^{i(t-s)\Delta}F(u(s))ds
\end{align*}
and expect that an asymptotic profile of the right hand side for large $t$ is the form 
\begin{align*}
\text{R.H.S.} & \sim e^{-at}\int_{t}^{\infty} e^{as} e^{i(t-s)\Delta} F(e^{-as}e^{is\Delta}\phi) ds
\\
& \sim e^{-at} \int_{t}^{\infty} e^{as} \left(M(t)D(t)\SCR{F}\right) \left(\SCR{F}^{-1} D(s^{-1}) M(-s)\right) F\left(e^{-as}M(s)D(s)\SCR{F}\phi\right)ds
\end{align*}
since $u(t)\sim e^{-at}e^{it\Delta}\phi$ and $e^{it\Delta}\phi=M(t)D(t)\SCR{F}M(t)\phi\sim M(t)D(t)\SCR{F}\phi$
 as $t\to \infty$. The decay rate $t^{-n(p-1)/2} e^{-apt}$ appeared in Theorems \ref{thm2} and \ref{thm1} is obtained by directly computing the last form.

%%%%%%%%%%%%%%%%%%%%%%%%%%%%%%%%%%%%%%%%%%%%%%%%%%%%%%%%%%%%%%%%%%%%%%%%%%%%%%%%%%%%%
%%%%%%%%%%%%%%%%%%%%%%%%%%%%%%%%%%%%%%%%%%%%%%%%%%%%%%%%%%%%%%%%%%%%%%%%%%%%%%%%%%%%%%%%%%%%%%%%%%%%%%%%%%%%%%%       $2 予備概念           %%%%%%%%%%%%%%%%%%%%%%%%%%%%
%%%%%%%%%%%%%%%%%%%%%%%%%%%%%%%%%%%%%%%%%%%%%%%%%%%%%%%%%%%%%%%%%%%%%%%%%%%%%%%%%%%%%%%%%%%%%%%%%%%%%%%%%%%%%%%%%%%%%%%%%%%%%%%%%%%%%%%%%%%%%%%%%%%%%%%%%%%%%%%%%%%%%%%%%%
%%%%%%%%%%%%%%%%%%%%%%%%%%%%%%%%%%%%%%%%%%%%%%%%%%%%%%%%%%%%%%%%%%%%%%%%%%%%%%%%%%%%%
\section{Function spaces}

We denote $\CAL{S}(\R^n)$ by the Schwartz space and $\CAL{S}'(\R^n)$ by the set of all the tempered distributions. 
Let $H^s$ be the usual $L^2$-based Sobolev space. The function space $\SCR{F}H^s$ is the weighted $L^2$ space, which is defined by 
\begin{align*}
\SCR{F}H^s=\{ f\in \CAL{S}'(\R^n) : \|f\|_{\SCR{F}H^s}=\|\J{\cdot}^{s} f\|_{L^2}<\infty \}.    
\end{align*}
We define $\Sigma=H^{[n/2]+1}\cap \SCR{F}H^{[n/2]+1}$ used in Theorem \ref{thm1}.
The following statement is the same as \cite[Proposition 2.2]{Kita}.
\begin{Prop} \label{PropKP}
Let $p$ be odd or $p>[n/2]+1$. Then it holds that 
\begin{align*}
\|F(u)\|_{H^{[n/2]+1}}\leq C \|u\|_{L^{\infty}}^{p-1} \|u\|_{H^{[n/2]+1}}
\end{align*}
for $u\in H^{[n/2]+1}$.
\end{Prop}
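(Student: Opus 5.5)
We set $k=[n/2]+1$ and $F(u)=\mu|u|^{p-1}u$, and split into the two cases on $p$. Since $k>n/2$, we have $H^k\subset L^\infty$, so all the expressions below make sense. We use the characterization $\|f\|_{H^k}\sim\|f\|_{L^2}+\sum_{|\beta|=k}\|\partial^\beta f\|_{L^2}$. The $L^2$ part is immediate from
\[
\|F(u)\|_{L^2}\le\|u\|_{L^\infty}^{p-1}\|u\|_{L^2}.
\]
It therefore remains to bound $\partial^\beta F(u)$ for $|\beta|=k$.

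\textbf{Case $p$ odd.} Here $F(u)=\mu\, u^{(p+1)/2}\bar u^{(p-1)/2}$ is a polynomial in $u,\bar u$. By the Leibniz rule, $\partial^\beta F(u)$ is a sum of products $\partial^{\beta_1}w_1\cdots\partial^{\beta_p}w_p$, where each $w_j\in\{u,\bar u\}$ and $\beta_1+\dots+\beta_p=\beta$. For each term we apply Hölder's inequality with exponents $q_j=2k/|\beta_j|$ (taking $q_j=\infty$ when $\beta_j=0$), and then the Gagliardo--Nirenberg inequality
\[
\|\partial^{\beta_j}u\|_{L^{2k/|\beta_j|}}\lesssim \|u\|_{L^\infty}^{1-|\beta_j|/k}\,\|\nabla^k u\|_{L^2}^{|\beta_j|/k}.
\]
The exponents sum to $\sum_j(1-|\beta_j|/k)=p-1$ and $\sum_j|\beta_j|/k=1$, which gives exactly $\|u\|_{L^\infty}^{p-1}\|u\|_{H^k}$.

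\textbf{Case $p>k$, non-odd.} Now $F\in C^{k}(\C)$ as a function of $(\mathrm{Re}\,z,\mathrm{Im}\,z)$, because $p>k$, and $|D^jF(z)|\lesssim|z|^{p-j}$ for $j\le k$. By the Faà di Bruno formula, $\partial^\beta F(u)$ is a sum of terms
\[
(D^jF)(u)\,\partial^{\beta_1}u\cdots\partial^{\beta_j}u, \qquad 1\le j\le k,\quad |\beta_i|\ge1,\quad \sum_i\beta_i=\beta,
\]
where we suppress the $u$ versus $\bar u$ bookkeeping. Each such term is bounded by $\|u\|_{L^\infty}^{p-j}\prod_{i=1}^j\|\partial^{\beta_i}u\|_{L^{2k/|\beta_i|}}$. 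The same Gagliardo--Nirenberg step then yields $\|u\|_{L^\infty}^{p-j}\cdot\|u\|_{L^\infty}^{j-1}\|\nabla^ku\|_{L^2}$, which is again $\|u\|_{L^\infty}^{p-1}\|u\|_{H^k}$.

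The one point requiring care is the justification of the chain rule at the top order. The strict inequality $p>k$ makes $D^kF$ continuous and locally bounded by $|z|^{p-k}$. This is what allows the formula to hold for $u\in H^k$: we first verify it for smooth $u$, then pass to general $u$ by density, using continuity of $F$ together with the bound just proved. The Gagliardo--Nirenberg endpoint $\|\partial^\beta u\|_{L^{2k/|\beta|}}\lesssim\|u\|_{L^\infty}^{1-|\beta|/k}\|\nabla^ku\|_{L^2}^{|\beta|/k}$ is standard and holds in all dimensions. This completes the argument; it is also the route taken in \cite[Proposition 2.2]{Kita}.
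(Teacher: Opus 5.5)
Your proof is correct. The paper gives no argument of its own here and simply quotes \cite[Proposition 2.2]{Kita}. Your route is the standard proof of that cited estimate: the Leibniz or Fa\`a di Bruno expansion, H\"older with exponents $2k/|\beta_j|$, and the Gagliardo--Nirenberg bound $\|\partial^\beta u\|_{L^{2k/|\beta|}}\lesssim\|u\|_{L^\infty}^{1-|\beta|/k}\|\nabla^k u\|_{L^2}^{|\beta|/k}$. The density step you sketch also applies unchanged in the odd case: uniform $H^k$ bounds, convergence $F(u_m)\to F(u)$ in $L^2$, and weak lower semicontinuity.
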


We introduce the definition of one of the modulation spaces also known as the Feichtinger algebra, which is based on the work of Feichtinger \cite{Fei1, Fei2, Fei3} and Gr\"{o}chenig \cite{Grochenig}.
\begin{Def}\label{DefM}
 Let $g\in \CAL{S}(\R^n)\setminus \{0\}$. Then we define
\begin{align*}
&M^{1,1}(\R^n)=\left\{f\in\CAL{S}'(\R^n) : \|f\|_{M^{1,1}}= \iint_{\R^{2n}} |V_{g}f(x,\xi)| dxd\xi<\infty\right\},
\end{align*}
where $V_{g}f(x,\xi)=\SCR{F} [\overline{g(\cdot-x)}f(\cdot)](\xi)$ is the short-time Fourier transform (STFT).
\end{Def}
It is known that the space $M^{1,1}$ does not depend on the function $g$.  
Modulation spaces have been used in the study of well-posedness and scattering for nonlinear dispersive equations (see e.g. \cite{Wang, Benyi-Okoudjou1, Sugimoto, TKato}).

We collect properties of the space $M^{1,1}(\R^n)$. We often write $M^{1,1}=M^{1,1}(\R^n)$ for short.
\begin{Prop} \label{PropM}
It holds that 
\begin{itemize}
\item If $f, g$ are in $M^{1,1}$, then their pointwise multiplication $fg$ is in $M^{1,1}$, that is, $\|fg\|_{M^{1,1}}\lesssim \|f\|_{M^{1,1}} \|g\|_{M^{1,1}}$.
\item If $f$ is in $M^{1,1}$, then $\SCR{F}f$ is in $M^{1,1}$, namely, $\|\SCR{F}f\|_{M^{1,1}}\lesssim \|f\|_{M^{1,1}}$.
\item $M^{1,1}\hookrightarrow L^1 \cap L^{\infty}$, in other words, $\|f\|_{L^p}\lesssim \|f\|_{M^{1,1}}$ for all $1\leq p\leq \infty$.
\item $\|e^{it\Delta}f\|_{M^{1,1}}\lesssim \J{t}^{n/2} \|f\|_{M^{1,1}}$ for all $t\in\R$.
\end{itemize}
\end{Prop}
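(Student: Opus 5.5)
The four properties in Proposition \ref{PropM} are standard facts about the Feichtinger algebra. I would prove them directly from Definition \ref{DefM}, using two tools: the independence of the $M^{1,1}$-norm from the window $g$, and the covariance identities of the short-time Fourier transform. I would argue in the order embedding, Fourier invariance, algebra property, Schr\"odinger bound, because the embedding is used in the later steps.

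\textbf{Embedding $M^{1,1}\hookrightarrow L^1\cap L^\infty$.} I would use the inversion formula
\begin{align*}
f=\frac{1}{\|g\|_{L^2}^2}(2\pi)^{-n/2}\iint V_g f(x,\xi)\, e^{i\xi\cdot(\cdot)} g(\cdot-x)\,dx\,d\xi .
\end{align*}
Taking the $L^q$-norm inside the integral gives $\|f\|_{L^q}\le C\|g\|_{L^q}\|V_gf\|_{L^1(\R^{2n})}$ for every $1\le q\le\infty$.

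\textbf{Fourier invariance.} The fundamental identity of time–frequency analysis reads $V_g f(x,\xi)=e^{-ix\cdot\xi}V_{\hat g}\hat f(\xi,-x)$. Integrating its absolute value shows $\|\hat f\|_{M^{1,1}}$ computed with window $\hat g$ equals $\|f\|_{M^{1,1}}$ computed with window $g$. Since the norm is independent of the window up to constants, this gives the claim.

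\textbf{Algebra property.} I would take a window of the form $g=g_1g_2$, for instance a Gaussian written as a product of two Gaussians. Then
\begin{align*}
V_g(fh)(x,\cdot)=(2\pi)^{-n/2}\,V_{g_1}f(x,\cdot)*V_{g_2}h(x,\cdot),
\end{align*}
because the Fourier transform of a product is a convolution. Young's inequality in $\xi$ bounds the $\xi$-integral by $\|V_{g_1}f(x,\cdot)\|_{L^1}\|V_{g_2}h(x,\cdot)\|_{L^1}$. Integrating in $x$ leaves $\int A(x)B(x)\,dx$ with $A(x)=\|V_{g_1}f(x,\cdot)\|_{L^1}$ and $B(x)=\|V_{g_2}h(x,\cdot)\|_{L^1}$. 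I would then bound $\sup_x B(x)\lesssim\|h\|_{M^{1,1}}$. This is the step that needs the most care. One way is to observe that $\int A(x)B(x)\,dx\le\|A\|_{L^1}\|B\|_{L^\infty}$ and that $\|V_{g_2}h(x,\cdot)\|_{L^1_\xi}$ is a local $\SCR{F}L^1$ norm of $h$ near $x$. That local norm is dominated by the $M^{1,1}$ norm, since $V_g$ of a function in $M^{1,1}$ is itself in $M^{1,1}$ of $\R^{2n}$, and so is bounded uniformly by the embedding step. An alternative route is the discrete (Gabor/Wiener amalgam) characterization $M^{1,1}=W(\SCR{F}L^1,\ell^1)$, where the algebra property reduces to $\ell^1\cdot\ell^1\subset\ell^1$ together with the fact that $\SCR{F}L^1$ is an algebra.

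\textbf{Schr\"odinger group.} I would write $V_g(e^{it\Delta}f)=V_{e^{it\Delta}g}$-type expressions via the identity
\begin{align*}
|V_g(e^{it\Delta}f)(x,\xi)|=|V_{e^{-it\Delta}g}f(x-2t\xi,\xi)|.
\end{align*}
The shear $(x,\xi)\mapsto(x-2t\xi,\xi)$ preserves measure, so $\|e^{it\Delta}f\|_{M^{1,1}}$ with window $g$ equals $\|f\|_{M^{1,1}}$ with window $g_t=e^{-it\Delta}g$. The change-of-window estimate
\begin{align*}
\|V_{g_t}f\|_{L^1}\lesssim \|g\|_{L^2}^{-2}\,\|V_g g_t\|_{L^1}\,\|V_gf\|_{L^1}
\end{align*}
then reduces the problem to computing $\|V_g g_t\|_{L^1(\R^{2n})}$ for a Gaussian $g$. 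An explicit Gaussian computation gives that this quantity is $\sim\J{t}^{n/2}$, which is the claimed growth.
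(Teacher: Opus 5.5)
\textbf{The gap: the algebra step.} The gap is at precisely the step you flagged. The bound $\sup_x B(x)\lesssim\|h\|_{M^{1,1}}$ is true, but the reason you give does not establish it. Grant that $V_{g_2}h\in M^{1,1}(\R^{2n})$. This is itself a nontrivial fact you do not prove; it needs invariance of $M^{1,1}$ under tensor products, linear changes of variables and partial Fourier transforms. Even so, the embedding $M^{1,1}(\R^{2n})\hookrightarrow L^\infty(\R^{2n})$ only bounds $\sup_{x,\xi}|V_{g_2}h(x,\xi)|$. It says nothing about the mixed norm $\sup_x\int|V_{g_2}h(x,\xi)|\,d\xi$. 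For that you would also need a restriction theorem from $M^{1,1}(\R^{2n})$ to the slices $\{x\}\times\R^n$. Your alternative via $M^{1,1}=W(\SCR{F}L^1,\ell^1)$ cites a result at least as deep as the algebra property itself, so it does not make the argument self-contained.

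\textbf{A short repair.} The fix uses only your first two steps. We have $V_{g_2}h(x,\cdot)=\SCR{F}[\overline{g_2(\cdot-x)}\,h]=(2\pi)^{-n/2}\,\SCR{F}[\overline{g_2(\cdot-x)}]*\widehat{h}$ and $|\SCR{F}[\overline{g_2(\cdot-x)}]|=|\SCR{F}[\overline{g_2}]|$. Young's inequality then gives $B(x)\le(2\pi)^{-n/2}\|\SCR{F}[\overline{g_2}]\|_{L^1}\|\widehat{h}\|_{L^1}$. Finally $\|\widehat{h}\|_{L^1}\lesssim\|\widehat{h}\|_{M^{1,1}}\lesssim\|h\|_{M^{1,1}}$ by your embedding and Fourier-invariance steps. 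Alternatively, the pointwise change-of-window inequality $|V_{g_2}h|\lesssim|V_gh|*|V_{g_2}g|$ on $\R^{2n}$ also works, since $\sup_x\int|V_{g_2}g(x,\xi)|\,d\xi<\infty$; you use this inequality in your last step anyway.

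\textbf{Comparison with the paper.} With this fix your argument is complete. It is a genuinely different route from the paper, which proves nothing here and simply cites B\'enyi--Okoudjou, Toft and B\'enyi--Gr\"ochenig--Okoudjou--Rogers. Your remaining steps check out in the paper's normalization: $V_gf(x,\xi)=e^{-ix\cdot\xi}V_{\widehat{g}}\widehat{f}(\xi,-x)$ and $V_g(e^{it\Delta}f)(x,\xi)=e^{-it|\xi|^2}V_{e^{-it\Delta}g}f(x-2t\xi,\xi)$. For $g(y)=e^{-|y|^2/2}$ one gets $\|V_g(e^{-it\Delta}g)\|_{L^1(\R^{2n})}=c_n(1+t^2)^{n/4}$, which is $\J{t}^{n/2}$ up to constants. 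Your version gives a self-contained, elementary proof tailored to $M^{1,1}$. The citations give brevity and validity on general $M^{p,q}$, which the paper does not need.
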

These facts are well known. We refer to \cite[Proposition 2.24]{Benyi-Okoudjou2} for the first two  statements, \cite[Proposition 1.7 (3)]{Toft} for the third, and \cite[Corollary 18]{Benyi} for the fourth.
Additionally the space $M^{1,1}$ has no condition on derivatives.
\begin{Prop} \label{Hougan}
The inclusion $M^{1,1}(\R^n)\subset H^1(\R^n)$ does not hold.
\end{Prop}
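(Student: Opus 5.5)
Since $M^{1,1}$ puts no condition on derivatives, I will construct a function in $M^{1,1}(\R^n)$ whose Fourier transform decays too slowly to be in $\SCR{F}^{-1}(\J{\xi}^{-1}L^2)$. The cleanest route goes through the Fourier-invariance statement of Proposition \ref{PropM}. The claim $f\in M^{1,1}$ is equivalent to $\SCR{F}f\in M^{1,1}$, and $f\in H^1$ is equivalent to $\SCR{F}f\in \SCR{F}H^1$, i.e. $\J{\xi}\widehat f\in L^2$. So it suffices to find $h\in M^{1,1}$ with $\J{\cdot}h\notin L^2$, and then set $f=\SCR{F}^{-1}h$. Since $\SCR{F}^{-1}h(x)=\SCR{F}h(-x)$ and reflection preserves $M^{1,1}$, the second item of Proposition \ref{PropM} gives $f\in M^{1,1}$. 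Moreover, $\|f\|_{H^1}=\|\J{\cdot}h\|_{L^2}=\infty$.

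The candidate for $h$ will be a superposition of translated Gaussians with summable coefficients,
\begin{align*}
h(x)=\sum_{k=1}^{\infty} c_k\, \varphi(x-x_k),\qquad \varphi(x)=e^{-|x|^2},
\end{align*}
with $x_k=(2^k,0,\dots,0)$ and $c_k=k^{-2}$. Membership in $M^{1,1}$ rests on two facts. First, $M^{1,1}$ is a Banach space and its norm is translation invariant, because $|V_g(\varphi(\cdot-y))(x,\xi)|=|V_g\varphi(x-y,\xi)|$. Second, $\varphi\in\CAL{S}\subset M^{1,1}$, since $V_g\varphi$ is Schwartz on $\R^{2n}$ when $g,\varphi\in\CAL{S}$. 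Together these give $\|h\|_{M^{1,1}}\le \|\varphi\|_{M^{1,1}}\sum_k c_k<\infty$, with the series converging in $M^{1,1}$.

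To see that $\J{x}h\notin L^2$, I note that the bumps are separated by distances $\gtrsim 2^k$ and are positive. Hence on the unit ball $B(x_k,1)$ we have $h\ge c_k\varphi$. It follows that
\begin{align*}
\int \J{x}^2|h|^2\,dx\ \ge\ \sum_k \int_{B(x_k,1)} \J{x}^2 c_k^2 \varphi(x-x_k)^2\,dx\ \gtrsim\ \sum_k k^{-4}4^{k}=\infty .
\end{align*}

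The main point requiring care is justifying the two facts used above: that $\CAL{S}\subset M^{1,1}$ and that the $M^{1,1}$ norm is translation invariant. Both follow directly from Definition \ref{DefM}. Otherwise the argument needs only the Fourier-invariance from Proposition \ref{PropM} and Plancherel, and it works in every dimension $n$.
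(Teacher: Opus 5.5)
Your proof is correct, and it is the paper's construction seen through the Fourier transform. The paper takes $f=\varphi\sum_{k}k^{-3/2}e^{ikx_1}$, so $\widehat{f}=\sum_k k^{-3/2}\widehat{\varphi}(\cdot-ke_1)$ is, like your $h=\widehat{f}$, a superposition of translated bumps in frequency whose coefficients are summable but not square-summable against the weight $\J{\cdot}$.

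The routes differ in the tools used.
\begin{itemize}
\item \textbf{Membership in $M^{1,1}$.} The paper works on the physical side with modulation invariance, $V_g(e^{ikx_1}\varphi)(x,\xi)=V_g\varphi(x,\xi-ke_1)$, so it never needs the Fourier invariance of $M^{1,1}$. Your argument relies on that nontrivial cited fact (the second item of Proposition \ref{PropM}) together with translation invariance. Your reflection step can be bypassed by writing $\SCR{F}^{-1}=\SCR{F}^{3}$.
\item \textbf{Failure of $H^1$.} The paper needs $\operatorname{supp}\widehat{\varphi}$ compact so that the bumps are orthogonal. It then computes $\|\xi_1\widehat{f_N}\|_{L^2}^2$ exactly and passes to the limit, and its borderline choice $k^{-3/2}$ produces the harmonic series. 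You instead use positivity of the Gaussians, which alone gives $h\ge c_k\varphi(\cdot-x_k)$ everywhere, together with the disjointness of the balls $B(x_k,1)$. Your geometric spacing $2^k$ leaves ample room.
\end{itemize}

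One small point you should state explicitly: the $M^{1,1}$-limit of the partial sums coincides with the pointwise sum $h$. This follows from $M^{1,1}\hookrightarrow L^1$.
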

This proof is postponed to Appendix for reader's convenience.
%%%%%%%%%%%%%%%%%%%%%%%%%%%%%%%%%%%%%%%%%%%%%%%%%%%%%%%%%%%%%%%%%%%%%%%%%%%%%%%%%%%%%%%%%
%%%%%%%%%%%%%%%%%%%%%%%%%%%%%%%%%%%%%%%%%%%%%%%%%%%%%%%%%%%%%%%%%%%%%%%%%%%%%%%%%%%%%%%%%%
%%%%%%%%%%%%%%%%%%%%%%%%%%%%%%%%%%%%%%%%%%%%%%%%%%%%%%%%%%%%%%%%%%%%%%%%%%%%%%%%%%%%%%%%%%
%%%%%%%%%%%%%%%%%%%%%%%%%%%%%%%%%%%%%%%%%%%%%%%%%%%%%%%%%%%%%%%%%%%%%%%%%%%%%%%%%%%%%%%%%%
%%%%%%%%%%%%%%%%%%%%%%%%%%%%%%%%%%%%%%%%%%%%%%%%%%%%%%%%%%%%%%%%%%%%%%%%%%%%%%%%%%%%%%%%%%
%%%%%%%%%%%%%%%%%%%%%%%%%%%%%%%%%%%%%%%%%%%%%%%%%%%%%%%%%%%%%%%%%%%%%%%%%%%%%%%%%%%%%%%%%%
%%%%%%%%%%%%%%%%%%%%%%%%%%%%%%%%%%%%%%%%%%%%%%%%%%%%%%%%%%%%%%%%%%%%%%%%%%%%%%%%%%%%%%%%%%
%%%%%%%%%%%%%%%%%%%%%%%%%%%%%%%%%%%%%%%%%%%%%%%%%%%%%%%%%%%%%%%%%%%%%%%%%%%%%%%%%%%%%%%%%%
%%%%%%%%%%%%%%%%%%%%%%%%%%%%%%%%%%%%%%%%%%%%%%%%%%%%%%%%%%%%%%%%%%%%%%%%%%%%%%%%%%%%%%%%%%
%%%%%%%%%%%%%%%%%%%%%%%%%%%%%%%%%%%%%%%%%%%%%%%%%%%%%%%%%%%%%%%%%%%%%%%%%%%%%%%%%%%%%%%%%%
\section{Small data global existence (SDGE)}
In this section, we prove small data global existence (hereafter abbreviated as SDGE) for (\ref{CP1}) in $M^{1,1}$ and $\Sigma$.

\begin{Prop} \label{SDGE1}
Let $p$ be odd and $u_{0}\in M^{1,1}$ be such that $\|u_{0}\|_{M^{1,1}}$ is sufficiently small.  Then there exists a unique solution $u \in C([0, \infty);M^{1,1})$ of (1). Moreover, the solution $u$ satisfies
\begin{align*}
\|u(t)\|_{M^{1,1}}\leq C \J{t}^{n/2} e^{-at}
\end{align*}
for all $t>0$. In particular, this solution $u$ fulfills (\ref{Asmp1}).
\end{Prop}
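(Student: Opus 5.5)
We argue by a contraction mapping on the Duhamel formulation, with a norm weighted by the growth of the linear flow on $M^{1,1}$ and by the damping. For $T\in(0,\infty]$ set
\begin{align*}
\|u\|_{X_T}=\sup_{0\le t<T}\J{t}^{-n/2}e^{at}\|u(t)\|_{M^{1,1}},
\end{align*}
and define
\begin{align*}
\Phi(u)(t)=e^{-at}e^{it\Delta}u_0-i\int_0^t e^{-a(t-s)}e^{i(t-s)\Delta}F(u(s))\,ds .
\end{align*}
Since $p$ is odd, $F(u)=\mu|u|^{p-1}u=\mu u^{(p+1)/2}\bar u^{(p-1)/2}$ is a polynomial in $u,\bar u$. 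Complex conjugation is bounded on $M^{1,1}$, because $|V_g\bar f(x,\xi)|=|V_{\bar g}f(x,-\xi)|$ and $M^{1,1}$ does not depend on the window. The algebra property in Proposition \ref{PropM} then gives
\begin{align*}
\|F(u)\|_{M^{1,1}}&\lesssim \|u\|_{M^{1,1}}^p,\\
\|F(u)-F(v)\|_{M^{1,1}}&\lesssim \left(\|u\|_{M^{1,1}}^{p-1}+\|v\|_{M^{1,1}}^{p-1}\right)\|u-v\|_{M^{1,1}}.
\end{align*}

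For the linear term, the last item of Proposition \ref{PropM} gives $\|e^{-at}e^{it\Delta}u_0\|_{M^{1,1}}\lesssim \J{t}^{n/2}e^{-at}\|u_0\|_{M^{1,1}}$. For the Duhamel term,
\begin{align*}
\Big\|\int_0^t e^{-a(t-s)}e^{i(t-s)\Delta}F(u(s))\,ds\Big\|_{M^{1,1}}
&\lesssim e^{-at}\int_0^t e^{as}\J{t-s}^{n/2}\J{s}^{np/2}e^{-aps}\,ds\,\|u\|_{X_T}^p\\
&\lesssim e^{-at}\J{t}^{n/2}\|u\|_{X_T}^p\int_0^\infty \J{s}^{np/2}e^{-a(p-1)s}\,ds.
\end{align*}
Here we used $\J{t-s}\le\J{t}$. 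The last integral is finite because $p>1$ and $a>0$. Hence
\begin{align*}
\|\Phi(u)\|_{X_\infty}\le C\|u_0\|_{M^{1,1}}+C\|u\|_{X_\infty}^p,
\end{align*}
and analogously
\begin{align*}
\|\Phi(u)-\Phi(v)\|_{X_\infty}\le C\left(\|u\|_{X_\infty}^{p-1}+\|v\|_{X_\infty}^{p-1}\right)\|u-v\|_{X_\infty}.
\end{align*}
Take $\|u_0\|_{M^{1,1}}=\delta$ small and work on the ball $\{\|u\|_{X_\infty}\le 2C\delta\}$. Then $\Phi$ is a contraction there and has a unique fixed point $u$, which is the global mild solution and satisfies $\|u(t)\|_{M^{1,1}}\le C\J{t}^{n/2}e^{-at}$.

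Uniqueness in all of $C([0,\infty);M^{1,1})$, not only in the ball, follows from the same Lipschitz estimate on short intervals $[0,T]$ with unweighted sup norms, iterated in time. The one routine point to check is continuity in time, $u\in C([0,\infty);M^{1,1})$. For this we use strong continuity of $e^{it\Delta}$ on $M^{1,1}$: it holds on the dense subspace $\CAL{S}$, and the local bound from Proposition \ref{PropM} extends it to all of $M^{1,1}$. The Duhamel integral is then continuous by dominated convergence.

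Finally, (\ref{Asmp1}) follows because $\frac{ap+\e}{2p-1}<a$ when $\e<a(p-1)$, so $\J{t}^{n/2}e^{-at}\le C e^{-\frac{ap+\e}{2p-1}t}$. The main obstacle is only the bookkeeping of the polynomial factor $\J{t}^{n/2}$ from the linear flow on $M^{1,1}$. It is absorbed by the extra exponential decay $e^{-a(p-1)s}$ produced by the nonlinearity.
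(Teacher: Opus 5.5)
Your proposal is correct and is essentially the paper's proof. Your weight $\J{t}^{-n/2}e^{at}$ on $\|u(t)\|_{M^{1,1}}$ is exactly the paper's $X$-norm applied to $v=e^{at}u$, and the contraction closes for the same reason: the factor $e^{-a(p-1)s}$ makes $\int_0^\infty \J{s}^{np/2}e^{-a(p-1)s}\,ds$ finite, which absorbs the growth $\J{t-s}^{n/2}\le\J{t}^{n/2}$ of $e^{i(t-s)\Delta}$ on $M^{1,1}$; your extra points (bounded conjugation on $M^{1,1}$, uniqueness in all of $C([0,\infty);M^{1,1})$, time continuity, and the check $\frac{ap+\e}{2p-1}<a$ for $\e<a(p-1)$) correctly fill in details the paper leaves implicit.
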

\begin{proof}
Let $v(t)=e^{at}u(t)$. Then the Cauchy problem (\ref{CP1}) is equivalent to 
\begin{align} \label{IntEqv}
\begin{cases}
&i\partial_{t}v+\Delta v=\mu e^{-a(p-1)t} |v|^{p-1} v,
\\
&v|_{t=0}=u_0.
\end{cases}
\end{align} 

We prove SDGE for (\ref{IntEqv}) by the standard contraction. 
Let us set $X=\{ v\in C([0,\infty); M^{1,1}) : \|v\|_{X}=\sup_{[0,\infty)} \J{t}^{-n/2} \|v(t)\|_{M^{1,1}}<\infty\}$, which is the Banach space. Applying the Duhamel formula to (\ref{IntEqv}), we have
\begin{align} \label{inteq}
 v(t)=e^{it\Delta}u_0-i \int_{0}^{t} e^{-a(p-1)s} e^{i(t-s)\Delta}F(v(s)) ds \equiv \Phi (v).
\end{align}
Taking $\J{t}^{-n/2} \|\cdot \|_{M^{1,1}}$ to the both sides of (\ref{inteq}) and using Proposition \ref{PropM}, we have
\begin{align*}
\J{t}^{-n/2}\|\Phi(v)(t)\|_{M^{1,1}} &\leq \J{t}^{-n/2} \|e^{it\Delta} u_0\|_{M^{1,1}}+\J{t}^{-n/2} \int_{0}^{t} e^{-a(p-1)s} \|e^{i(t-s)\Delta}F(v(s))\|_{M^{1,1}} ds
\\
&\lesssim \|u_0\|_{M^{1,1}}+ \int_{0}^{t} e^{-a(p-1)s} \|v(s)\|_{M^{1,1}}^{p} ds
\\
&\lesssim \|u_0\|_{M^{1,1}}+ \|v\|_{X}^{p} \int_{0}^{t} \J{s}^{np/2} e^{-a(p-1)s}  ds
\\
&\lesssim \|u_0\|_{M^{1,1}}+ \|v\|_{X}^{p}, 
\end{align*}
so that we have
\begin{align} \label{Inq211}
\|\Phi(v)\|_{X}\leq C_1 \|u_0\|_{M^{1,1}}+C_1 \|v\|_{X}^{p}
\end{align}
with some constant $C_1>0$ depending only on $a,n,p$.
Moreover, if $\|v\|_{X}, \|w\|_{X} \leq \delta$, then we have
\begin{align*}
& \hspace{5mm} \J{t}^{-n/2}  \|\Phi(v)(t)-\Phi(w)(t)\|_{M^{1,1}} 
  \\ 
 & \lesssim \int_{0}^{t}e^{-(p-1)as} \||v(s)|^{p-1}v(s)-|w(s)|^{p-1}w(s)\|_{M^{1,1}} \,ds 
\\
    & \lesssim \int_{0}^{t} e^{-(p-1)as} \left( \| v(s) \|_{M^{1,1}}^{p-1} + \| w(s) \|_{M^{1,1}}^{p-1} \right) \| v(s) - w(s) \|_{M^{1,1}} \,ds 
\\
    & \lesssim \delta^{p-1}\|v-w\|_{X}\int_{0}^{t}e^{-(p-1)as}\langle s \rangle^{\frac{np}{2}} \,ds,
\end{align*}
which implies 
\begin{align} \label{Inq212}
\|\Phi(v)-\Phi(w)\|_{X}\leq C_2 \delta^{p-1} \|v-w\|_{X}
\end{align}
for some constant $C_2>0$ relying only on $a,n,p$.
Hence, by choosing $\delta>0$ with $\delta<\min\{C_{1}^{1/(p-1)}, C_{2}^{1/(p-1)}\}$ and taking the initial data such as $\|u_0\|_{M^{1,1}}<(\delta-C_1 \delta^{p})/C_1$, we see that $\Phi$ is a contraction mapping on the closed ball of radius $\delta$ in $X$ by (\ref{Inq211}), (\ref{Inq212}). 
Thus we have a unique solution $v\in C([0,\infty); M^{1,1})$ since the solution $v\in X$ is continuous on $[0,\infty)$ in $M^{1,1}$ by a similar argument as above. Noting that $u(t)=e^{-at}v(t)$, we obtain the desired result. 
\end{proof}

Next we prove SDGE in $\Sigma$.
The following statement is the same as in \cite[Proposition 2.4]{Kita} if $a=0$.
\begin{Prop} \label{PropSol}
 Let $p$ be the same as in Theorem \ref{thm1} and $v_0 \in \Sigma$ with $\|v_0\|_{\Sigma} \le \delta$ sufficiently small. Then, there exists a unique solution $v(t)$ to (\ref{inteq}) such that
\begin{align*}
&v \in C([0,\infty); H^{[n/2]+1}),
\\
&\|v(t)\|_{H^{[n/2]+1}} + \sum_{|\alpha|=[n/2]+1} \|J(t)^{\alpha}v(t)\|_{L^2} \le 2\delta,
\\
&\|v(t)\|_{L^{\infty}} \le C\delta \langle t \rangle^{-n/2}.
\end{align*}
\end{Prop}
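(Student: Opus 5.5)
Here $J(t)=x+2it\nabla$ is the Galilean operator, $J(t)^{\alpha}=\prod_j J_j(t)^{\alpha_j}$, and $J(t)=M(t)(2it\nabla)M(-t)$; so $\|J(t)^{\alpha}v\|_{L^2}=(2t)^{|\alpha|}\|\nabla^{\alpha}(M(-t)v)\|_{L^2}$. The plan is a contraction argument for the map $\Phi$ of (\ref{inteq}), with data $v_0$ in place of $u_0$. The damping factor $e^{-a(p-1)s}$ makes the argument simpler than the undamped case of \cite[Proposition 2.4]{Kita}. Set $m=[n/2]+1$ and define
\begin{align*}
\|v\|_{Y}=\sup_{t\ge0}\Big(\|v(t)\|_{H^{m}}+\sum_{|\alpha|=m}\|J(t)^{\alpha}v(t)\|_{L^2}\Big),
\end{align*}
and work in the closed ball $\{v\in C([0,\infty);H^m):\|v\|_Y\le 2\delta\}$ with the $L^2$-type metric $\sup_t\|v(t)-w(t)\|_{L^2}$ (or the full $Y$-norm if the nonlinearity is smooth enough).

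\textbf{Step 1: $L^\infty$ decay from the $Y$-norm.} Write $v=M(t)D(t)\SCR{F}M(t)e^{-it\Delta}v$. By Sobolev embedding $H^m\subset \SCR{F}L^1$ (valid since $m>n/2$) and $\|x^{\alpha}e^{-it\Delta}v\|_{L^2}=\|J(t)^{\alpha}v\|_{L^2}$, we get the standard bound
\begin{align*}
\|v(t)\|_{L^\infty}\lesssim \J{t}^{-n/2}\Big(\|v(t)\|_{L^2}+\sum_{|\alpha|=m}\|J(t)^\alpha v(t)\|_{L^2}\Big)
\end{align*}
for $t\ge1$. For $t\le1$ use $H^m\subset L^\infty$ directly. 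This gives the third asserted estimate once $\|v\|_Y\le2\delta$.

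\textbf{Step 2: Estimates for the Duhamel term.} The operator $J(t)$ commutes with $i\partial_t+\Delta$, so $J(t)^{\alpha}e^{i(t-s)\Delta}=e^{i(t-s)\Delta}J(s)^{\alpha}$. Both $J^\alpha$ and $\nabla^\alpha$ are then controlled by applying them to $F(v(s))$.
\begin{itemize}
\item For the $H^m$ part, Proposition \ref{PropKP} gives $\|F(v)\|_{H^m}\lesssim\|v\|_{L^\infty}^{p-1}\|v\|_{H^m}$.
\item For the $J^\alpha$ part, use $J(s)^\alpha=M(s)(2is\nabla)^\alpha M(-s)$ together with gauge invariance, $F(v)=M(s)F(M(-s)v)$. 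Applying Proposition \ref{PropKP} to $w=M(-s)v$ gives
\begin{align*}
\|J(s)^\alpha F(v)\|_{L^2}\lesssim\|v\|_{L^\infty}^{p-1}\Big(\|v\|_{L^2}+\sum_{|\beta|=m}\|J(s)^\beta v\|_{L^2}\Big).
\end{align*}
Here only the top-order part of the Kato--Ponce estimate is needed, plus an $L^2$ term.
\end{itemize}
Combining these,
\begin{align*}
\|\Phi(v)\|_Y\le \|v_0\|_{\Sigma}+C\int_0^\infty e^{-a(p-1)s}\|v(s)\|_{L^\infty}^{p-1}\|v\|_Y\,ds\le\delta+C(2\delta)^p.
\end{align*}
This is at most $2\delta$ for small $\delta$. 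Notice that $e^{-a(p-1)s}$ alone makes the time integral finite, so no use of the $t^{-n/2}$ decay is needed for integrability. The difference estimate in $L^2$ follows from the pointwise bound $|F(v)-F(w)|\lesssim(|v|^{p-1}+|w|^{p-1})|v-w|$ together with the $L^\infty$ bound.

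\textbf{Step 3: Conclusion and the main obstacle.} The ball is complete in the weak metric by lower semicontinuity of the norms, so the fixed point exists. Continuity in $H^m$ and uniqueness follow in the standard way. The main obstacle is Step 2 for the $J^\alpha$ terms. One must justify that Proposition \ref{PropKP} applies to $M(-s)v$, whose $H^m$ norm grows like $s^m$. This is harmless because we only ever use the combination $s^m\|\nabla^\alpha M(-s)v\|=\|J^\alpha v\|$ at top order plus the $L^2$ norm. For non-odd $p>m$, I would first check that the Kato--Ponce estimate in \cite{Kita} is indeed stated in this homogeneous form.
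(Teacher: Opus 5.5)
Your proposal is correct and follows essentially the route the paper intends. The paper simply defers to the argument of \cite{Kita} in the $H^{[n/2]+1}$ and $J(t)^{\alpha}$-weighted norm, remarking that the factor $e^{-a(p-1)s}$ is harmless; your steps (dispersive $L^\infty$ bound via the factorization, $J(t)^\alpha e^{i(t-s)\Delta}=e^{i(t-s)\Delta}J(s)^\alpha$, gauge invariance plus Proposition \ref{PropKP}, and a contraction in an $L^2$-type metric) reconstruct that scheme.

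The obstacle you flag is not really one. Even the inhomogeneous estimate of Proposition \ref{PropKP}, applied to $M(-s)v$, gives
$\|J(s)^\alpha F(v)\|_{L^2}\lesssim (2s)^{[n/2]+1}\|v\|_{L^2}\|v\|_{L^\infty}^{p-1}+\|v\|_{L^\infty}^{p-1}\sum_{|\beta|=[n/2]+1}\|J(s)^\beta v\|_{L^2}$.
Note that $\|M(-s)v\|_{H^{[n/2]+1}}$ does not grow in $s$; the $\langle s\rangle^{[n/2]+1}$ loss comes only from the prefactor acting on the $L^2$ part, and $e^{-a(p-1)s}$ absorbs it.
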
 
The proof is almost the same argument as in \cite{Kita}. The only difference is the presence or absence of the exponential decay $e^{-a(p-1)s}$ in (\ref{inteq}), which is harmless.

\begin{Prop}\label{SDGE2}
Let $p$ be the same as in Theorem \ref{thm1} and  $u_0 \in \Sigma$ be such that $\|u_0\|_\Sigma$ is small enough. Then there exists a unique solution $u \in C([0, \infty); \Sigma)$ to (1). Moreover the solution $u$ fulfills
\begin{align*}
\|e^{-it\Delta}u(t)\|_{\Sigma}\leq C e^{-at}
\end{align*}
for all $t>0$.
\end{Prop}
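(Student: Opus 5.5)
We reduce Proposition \ref{SDGE2} to Proposition \ref{PropSol} through the substitution $v(t)=e^{at}u(t)$. As in the proof of Proposition \ref{SDGE1}, this turns (\ref{CP1}) into (\ref{IntEqv}), whose integral form is (\ref{inteq}). Apply Proposition \ref{PropSol} with $v_0=u_0$ and $\delta=\|u_0\|_\Sigma$ small. This gives a unique global solution $v\in C([0,\infty);H^{[n/2]+1})$ with the uniform bound
\begin{align*}
\|v(t)\|_{H^{[n/2]+1}}+\sum_{|\alpha|=[n/2]+1}\|J(t)^{\alpha}v(t)\|_{L^2}\le 2\delta ,
\end{align*}
where $J(t)=x+2it\nabla=e^{it\Delta}xe^{-it\Delta}$. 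Setting $u(t)=e^{-at}v(t)$ then gives the solution of (\ref{CP1}), and uniqueness transfers directly because the map $u\leftrightarrow v$ is a bijection.

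The key step is to turn this bound into the estimate $\|e^{-it\Delta}v(t)\|_{\Sigma}\lesssim\delta$, uniformly in $t$. Since $e^{-it\Delta}$ commutes with $\nabla$ and is unitary on $H^s$, we have $\|e^{-it\Delta}v(t)\|_{H^{[n/2]+1}}=\|v(t)\|_{H^{[n/2]+1}}\le 2\delta$. For the weighted part we use the commutation relation $x^{\alpha}e^{-it\Delta}=e^{-it\Delta}J(t)^{\alpha}$. This gives $\|x^{\alpha}e^{-it\Delta}v(t)\|_{L^2}=\|J(t)^{\alpha}v(t)\|_{L^2}$ for $|\alpha|=[n/2]+1$. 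Together with the $L^2$ part, and the equivalence $\|\J{x}^{k}f\|_{L^2}\simeq\|f\|_{L^2}+\sum_{|\alpha|=k}\|x^\alpha f\|_{L^2}$, this yields $\|e^{-it\Delta}v(t)\|_{\Sigma}\lesssim\delta$. Multiplying by $e^{-at}$ then gives $\|e^{-it\Delta}u(t)\|_{\Sigma}\le Ce^{-at}$.

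Next we must show $u\in C([0,\infty);\Sigma)$, which Proposition \ref{PropSol} only states for $H^{[n/2]+1}$. We plan to prove continuity of $t\mapsto e^{-it\Delta}v(t)$ in $\mathscr{F}H^{[n/2]+1}$ from the Duhamel formula
\begin{align*}
e^{-it\Delta}v(t)=u_0-i\int_0^t e^{-a(p-1)s}e^{-is\Delta}F(v(s))\,ds .
\end{align*}
For this we bound $\|J(s)^{\alpha}F(v(s))\|_{L^2}\lesssim\|v\|_{L^\infty}^{p-1}\sum_{|\beta|=|\alpha|}\|J(s)^{\beta}v\|_{L^2}$. We obtain this by writing $J(t)=M(t)(2it\nabla)M(-t)$ and applying Proposition \ref{PropKP} to $M(-t)v$, using gauge invariance of $F$. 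The integrand is then integrable in $s$ thanks to the factor $e^{-a(p-1)s}$, so the map is continuous in time. Finally, $u(t)=e^{it\Delta}\bigl(e^{-at}e^{-it\Delta}v(t)\bigr)$ and $e^{it\Delta}$ is locally bounded on $\Sigma$ ($\|x^\alpha e^{it\Delta}f\|\lesssim\langle t\rangle^{k}\|f\|_\Sigma$). This gives continuity of $u$ in $\Sigma$.

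The main obstacle is this last continuity and weighted-nonlinear estimate. The decay bound itself is essentially immediate from the commutation identity. Here we simply rely on the fact that Kita's argument, cited for Proposition \ref{PropSol}, already controls $J(t)^\alpha F(v)$ exactly in this way.
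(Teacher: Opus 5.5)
Your proposal is correct and follows the paper's proof: you set $v=e^{at}u$, invoke Proposition \ref{PropSol}, and convert the bounds on $\|v(t)\|_{H^{[n/2]+1}}$ and $\|J(t)^{\alpha}v(t)\|_{L^2}$ into a bound on $\|e^{-it\Delta}v(t)\|_{\Sigma}$, using unitarity of $e^{-it\Delta}$ and the identity $x^{\alpha}e^{-it\Delta}=e^{-it\Delta}J(t)^{\alpha}$. Your additional Duhamel argument for $u\in C([0,\infty);\Sigma)$ addresses a point the paper leaves implicit, and it goes through: Proposition \ref{PropKP} is the inhomogeneous estimate, but the extra factor $(2s)^{[n/2]+1}\|v(s)\|_{L^2}$ it produces is harmless because only local integrability in $s$ is needed, and the final step also uses the strong continuity of $e^{it\Delta}$ on $\Sigma$, which you should state explicitly.
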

\begin{proof} 
We consider (\ref{inteq}) by putting $v(t)=e^{at}u(t)$. By Proposition \ref{PropSol} there exists a solution $u \in C([0,\infty); H^{[n/2]+1})$ such that
\begin{align}
&\|u(t)\|_{H^{[n/2]+1}} + \sum_{|\alpha|=[n/2]+1} \|J(t)^{\alpha}u(t)\|_{L^2} \leq C e^{-at}, \label{inq101}
\\
&\|u(t)\|_{L^{\infty}} \leq C \langle t \rangle^{-n/2} e^{-at}  \label{inq102}
\end{align}
with some constant $C>0$ depending on $\delta$. The conservation law of $e^{it\Delta}$ and (\ref{inq101}) derive
\begin{align*}
\|e^{-it\Delta}u(t)\|_{H^{[n/2]+1}}=\|u(t)\|_{H^{[n/2]+1}} \lesssim e^{-at}
\end{align*}
and
\begin{align*}
\|e^{-it\Delta}u(t)\|_{\SCR{F}H^{[n/2]+1}}&=\|\J{x}^{[n/2]+1} e^{-it\Delta}u(t)\|_{L^2}
\\
&\lesssim \|e^{-it\Delta}u(t)\|_{L^2}+\sum_{|\alpha|=[n/2]+1} \|x^{\alpha} e^{-it\Delta}u(t)\|_{L^2}
\\
&= \|u(t)\|_{L^2}+\sum_{|\alpha|=[n/2]+1} \|J(t)^{\alpha} u(t)\|_{L^2}
\\
&\lesssim e^{-at},
\end{align*}
which complete the proof.
\end{proof}
%%%%%%%%%%%%%%%%%%%%%%%%%%%%%%%%%%%%%%%%%%%%%%%%%%%%%%%%%%%%%%%%%%%%%%%%%%%%%%%%%%%%%%%%%
%%%%%%%%%%%%%%%%%%%%%%%%%%%%%%%%%%%%%%%%%%%%%%%%%%%%%%%%%%%%%%%%%%%%%%%%%%%%%%%%%%%%%%%%%%
%%%%%%%%%%%%%%%%%%%%%%%%%%%%%%%%%%%%%%%%%%%%%%%%%%%%%%%%%%%%%%%%%%%%%%%%%%%%%%%%%%%%%%%%%%
%%%%%%%%%%%%%%%%%%%%%%%%%%%%%%%%%%%%%%%%%%%%%%%%%%%%%%%%%%%%%%%%%%%%%%%%%%%%%%%%%%%%%%%%%%
%%%%%%%%%%%%%%%%%%%%%%%%%%%%%%%%%%%%%%%%%%%%%%%%%%%%%%%%%%%%%%%%%%%%%%%%%%%%%%%%%%%%%%%%%%
%%%%%%%%%%%%%%%%%%%%%%%%%%%%%%%%%%%%%%%%%%%%%%%%%%%%%%%%%%%%%%%%%%%%%%%%%%%%%%%%%%%%%%%%%%
%%%%%%%%%%%%%%%%%%%%%%%%%%%%%%%%%%%%%%%%%%%%%%%%%%%%%%%%%%%%%%%%%%%%%%%%%%%%%%%%%%%%%%%%%%
%%%%%%%%%%%%%%%%%%%%%%%%%%%%%%%%%%%%%%%%%%%%%%%%%%%%%%%%%%%%%%%%%%%%%%%%%%%%%%%%%%%%%%%%%%
%%%%%%%%%%%%%%%%%%%%%%%%%%%%%%%%%%%%%%%%%%%%%%%%%%%%%%%%%%%%%%%%%%%%%%%%%%%%%%%%%%%%%%%%%%
%%%%%%%%%%%%%%%%%%%%%%%%%%%%%%%%%%%%%%%%%%%%%%%%%%%%%%%%%%%%%%%%%%%%%%%%%%%%%%%%%%%%%%%%%%
\section{Proof of Theorem \ref{thm2}}

The following Lemma is elementary estimates used in the subsequent proofs. 
\begin{Lem} \label{ElemLem}
Let $\alpha\ne0, \beta>0$. There exist constants $C_1,C_2>0$ and $T>0$ such that
\begin{align*}
C_1 t^{\alpha} e^{-\beta t}\leq \int_{t}^{\infty} s^{\alpha} e^{-\beta s} ds \leq C_2 t^{\alpha} e^{-\beta t}
\end{align*}
for all $t>T$.
\end{Lem}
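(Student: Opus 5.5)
The plan is to compare the integral directly with $t^{\alpha}e^{-\beta t}$ by integrating by parts once and bounding the remainder relative to the main term. Write
\begin{align*}
I(t)=\int_t^\infty s^{\alpha}e^{-\beta s}\,ds=\frac{1}{\beta}t^{\alpha}e^{-\beta t}+\frac{\alpha}{\beta}\int_t^\infty s^{\alpha-1}e^{-\beta s}\,ds,
\end{align*}
which holds for $t>0$ since the boundary term at infinity vanishes. The integral on the right is dominated by $I(t)$ itself: for $s\ge t$ we have $s^{\alpha-1}\le t^{-1}s^{\alpha}$, so
\begin{align*}
0\le\int_t^\infty s^{\alpha-1}e^{-\beta s}\,ds\le t^{-1}I(t).
\end{align*}

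We then split into cases according to the sign of $\alpha$.

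If $\alpha>0$, the identity gives $I(t)\ge \beta^{-1}t^{\alpha}e^{-\beta t}$, which is the lower bound. For the upper bound,
\begin{align*}
I(t)\le \beta^{-1}t^{\alpha}e^{-\beta t}+\frac{\alpha}{\beta t}I(t).
\end{align*}
Choose $T=2\alpha/\beta$. For $t>T$ the coefficient $\alpha/(\beta t)$ is less than $1/2$, so the last term can be absorbed into the left-hand side, giving $I(t)\le 2\beta^{-1}t^{\alpha}e^{-\beta t}$.

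If $\alpha<0$, the correction term is nonpositive, so $I(t)\le \beta^{-1}t^{\alpha}e^{-\beta t}$ directly. Its absolute value is at most $\frac{|\alpha|}{\beta t}I(t)$, so
\begin{align*}
I(t)\ge \beta^{-1}t^{\alpha}e^{-\beta t}-\frac{|\alpha|}{\beta t}I(t).
\end{align*}
Hence $(1+|\alpha|/(\beta t))\,I(t)\ge \beta^{-1}t^{\alpha}e^{-\beta t}$. For $t>T=|\alpha|/\beta$ the factor is at most $2$, which yields $I(t)\ge (2\beta)^{-1}t^{\alpha}e^{-\beta t}$.

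One must check that $I(t)$ is finite so that the absorption step is legitimate. This holds because $e^{-\beta s}$ beats any power of $s$, and $t>0$ avoids the singularity at $s=0$ when $\alpha<0$. There is no real obstacle: the only delicate point is making the absorption quantitative, and the choice $T\sim|\alpha|/\beta$ does this with explicit constants $C_1=(2\beta)^{-1}$ and $C_2=2\beta^{-1}$. The hypothesis $\alpha\neq0$ is not actually needed, since $\alpha=0$ is the exact identity $I(t)=\beta^{-1}e^{-\beta t}$.
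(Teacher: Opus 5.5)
Your proof is correct and takes the same route as the paper, which only says the lemma follows from integration by parts. You supply the omitted details: one integration by parts, the bound $\int_t^\infty s^{\alpha-1}e^{-\beta s}\,ds\le t^{-1}I(t)$, and absorption once $t\gtrsim|\alpha|/\beta$; your remark that $\alpha\neq0$ is unnecessary is also right.
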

\begin{proof}
This follows easily from integration by parts.
\end{proof}

We show the existence of a scattering state $\phi\in M^{1,1}$. 
\begin{Prop} \label{Cauchy1}
For the solution $u\in C([0,\infty); M^{1,1})$ to (\ref{CP1}) satisfying (\ref{Asmp1}), there exists a unique $\phi\in M^{1,1}$ such that
\begin{align*}
\lim_{t\to\infty}\|u(t)-e^{-at}e^{it\Delta}\phi \|_{M^{1,1}}=0.
\end{align*}
\end{Prop}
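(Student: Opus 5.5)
We pass to the profile $w(t)=e^{-it\Delta}e^{at}u(t)$, which stays in $M^{1,1}$ for each $t$ by the fourth item of Proposition \ref{PropM}. The plan is to show that $w(t)$ is Cauchy in $M^{1,1}$ as $t\to\infty$, take $\phi$ as its limit, and then transfer the convergence back to $u$.

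First, by the Duhamel formula for $v=e^{at}u$ (as in the proof of Proposition \ref{SDGE1}),
\begin{align*}
w(t)=u_0-i\int_0^t e^{-a(p-1)s}e^{-is\Delta}F(e^{as}u(s))\,ds .
\end{align*}
Hence, for $t_2>t_1$,
\begin{align*}
\|w(t_2)-w(t_1)\|_{M^{1,1}}\lesssim \int_{t_1}^{t_2} e^{as}\J{s}^{n/2}\|u(s)\|_{M^{1,1}}^p\,ds .
\end{align*}
Here we used the fourth item of Proposition \ref{PropM} for $e^{-is\Delta}$ and the algebra property (first item) for $F(u)=\mu u^{(p+1)/2}\bar u^{(p-1)/2}$, with $p$ odd. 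For the algebra step we also need $\|\bar f\|_{M^{1,1}}\lesssim\|f\|_{M^{1,1}}$; this holds because $V_g\bar f(x,\xi)=\overline{V_{\bar g}f(x,-\xi)}$ and the space does not depend on the window.

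Second, by (\ref{Asmp1}) the integrand is bounded by $\J{s}^{n/2}e^{as-\frac{(ap+\e)p}{2p-1}s}$. We need this exponent to be negative, i.e.
\begin{align*}
a<\frac{ap^2+\e p}{2p-1},
\end{align*}
which is equivalent to $a(p-1)^2+\e p>0$. This is true. Therefore the integral over $[t_1,\infty)$ tends to zero, $w(t)$ is Cauchy, and it has a limit $\phi\in M^{1,1}$.

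Third, we write
\begin{align*}
u(t)-e^{-at}e^{it\Delta}\phi=e^{-at}e^{it\Delta}\bigl(w(t)-\phi\bigr),
\end{align*}
so that
\begin{align*}
\|u(t)-e^{-at}e^{it\Delta}\phi\|_{M^{1,1}}\lesssim e^{-at}\J{t}^{n/2}\int_t^\infty e^{as}\J{s}^{n/2}\|u(s)\|_{M^{1,1}}^p\,ds .
\end{align*}
By Lemma \ref{ElemLem} (or a direct bound) the right-hand side is $O\bigl(\J{t}^{n}e^{-\frac{(ap+\e)p}{2p-1}t}\bigr)$, which tends to $0$.

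Uniqueness is immediate. If $\phi_1,\phi_2$ both work, then
\begin{align*}
\|\phi_1-\phi_2\|_{L^2}=e^{at}\|e^{-at}e^{it\Delta}(\phi_1-\phi_2)\|_{L^2},
\end{align*}
and the right-hand side is at most $e^{at}$ times $o(1)$. That does not close the argument because of the factor $e^{at}$. Instead we define $\phi$ as the limit of $w(t)$; any other $\phi'$ with the convergence property must satisfy $e^{at}\|u(t)-e^{-at}e^{it\Delta}\phi'\|\to0$, i.e. the stronger convergence $w(t)\to\phi'$, and limits are unique. So we read the statement's uniqueness in this sense, which is how the scattering state is understood in \cite{Inui}.

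The main obstacle I expect is the bookkeeping of the polynomial growth $\J{s}^{n/2}$ from the propagator in $M^{1,1}$, which must be absorbed by the exponential gap. The exponent computation above handles this with room to spare.
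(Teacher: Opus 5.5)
Your proposal is correct and follows the paper's proof. Both show that $e^{at}e^{-it\Delta}u(t)$ is Cauchy in $M^{1,1}$ using the $\J{s}^{n/2}$ bound for $e^{-is\Delta}$, the algebra property, and the integrability of $\J{s}^{n/2}e^{as}e^{-\frac{ap+\e}{2p-1}ps}$; your check $a(p-1)^2+\e p>0$ is what the paper leaves implicit, and the paper omits the transfer back to $u$ and any uniqueness argument. Your doubt about uniqueness is justified and can be sharpened: by (\ref{Asmp1}) and $\|e^{-at}e^{it\Delta}\phi\|_{M^{1,1}}\lesssim e^{-at}\J{t}^{n/2}\|\phi\|_{M^{1,1}}\to0$, the displayed limit holds for \emph{every} $\phi\in M^{1,1}$, so uniqueness only has content in the rescaled sense $\|e^{at}u(t)-e^{it\Delta}\phi\|\to0$; there it follows by comparing in $L^2$, where $e^{it\Delta}$ is unitary, rather than by asserting $M^{1,1}$-convergence of $w(t)$ to $\phi'$.
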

\begin{proof}
We prove that $\{ e^{at} e^{-it\Delta} u(t)\}_{t>0}$ is a Cauchy sequence in $M^{1,1}$. We obtain by Proposition \ref{PropM} and the assumption (\ref{Asmp1}) for $t>t'>T$,
\begin{align*}
\|e^{at} e^{-it\Delta} u(t)-e^{at'} e^{-it'\Delta} u(t')\|_{M^{1,1}}&=\left\| \int_{t'}^{t} e^{as} e^{-is\Delta} F(u(s)) ds \right \|_{M^{1,1}}
\\
&\lesssim  \int_{t'}^{t} e^{as} \J{s}^{n/2} \|u(s)\|_{M^{1,1}}^{p}ds
\\
&\lesssim  \int_{t'}^{t} \J{s}^{n/2} e^{as}  e^{-\frac{ap+\e}{2p-1}ps} ds\hspace{2mm}\to 0
\end{align*}
as $t'\to \infty$ because the last integrand belongs to $L^{1}([T,\infty))$.  
\end{proof}  

We decompose the Duhamel term of (\ref{CP1}) via the method of Kita \cite{Kita} in order to obtain the lower bound in (\ref{MainInq2}).
The factorization $e^{it\Delta}=M(t)D(t)\SCR{F} M(t)$ yields
\begin{align} \label{decomp1}
     v(t)-e^{it \Delta}\phi&= i \int_{t}^{\infty}e^{-a(p-1)s}e^{i(t-s)\Delta}F(v(s))\,ds \notag
\\
&= i \int_{t}^{\infty}e^{-a(p-1)s}e^{i(t-s)\Delta}F(e^{is\Delta}\phi)\,ds  \notag
\\
&\quad + i \int_{t}^{\infty}e^{-a(p-1)s}e^{i(t-s)\Delta}\{F(v(s))-F(e^{is\Delta}\phi)\} \,ds  \notag
\\
&=i \int_{t}^{\infty}e^{-a(p-1)s}e^{i(t-s)\Delta}[F(M(s)D(s)\widehat{\phi})]\,ds  \notag
\\
&\hspace{5mm}+i \int_{t}^{\infty}e^{-a(p-1)s}e^{i(t-s)\Delta}[F(e^{is\Delta}\phi)-F(M(s)D(s)\widehat{ \phi})]\,ds  \notag
\\
&\quad + i \int_{t}^{\infty}e^{-a(p-1)s}e^{i(t-s)\Delta}\{F(v(s))-F(e^{is\Delta}\phi)\} \,ds  \notag
\\
&\equiv I_{1}(t,x)+R_{1}(t,x)+R_{2}(t,x).
\end{align}
We further decompose $I_1(t,x)$ by using the gauge invariance of the nonlinearity (i.e. $F(M(s)u)=M(s)F(u)$) as follows:
\begin{align} \label{decomp2}
I_{1}(t,x) &=i \int_{t}^{\infty}e^{-a(p-1)s} s^{-n(p-1)/2} e^{i(t-s)\Delta}[M(s)D(s) F(\widehat{\phi})]\,ds  \notag
\\
&=i \int_{t}^{\infty}e^{-a(p-1)s} s^{-n(p-1)/2} M(t)D(t) \SCR{F} M(t) M(-s) \SCR{F}^{-1} F(\widehat{\phi})\,ds  \notag
\\
&=i  \int_{t}^{\infty}e^{-a(p-1)s} s^{-n(p-1)/2} M(t)D(t) F(\widehat{\phi})\,ds  \notag
\\
&\hspace{5mm}+i \int_{t}^{\infty}e^{-a(p-1)s} s^{-n(p-1)/2} M(t)D(t) \SCR{F} \{M(t)M(-s)-1\} \SCR{F}^{-1} F(\widehat{\phi})\,ds  \notag
\\
&\equiv I_2(t,x)+R_3(t,x).
\end{align}
We have
\begin{align*}
\|I_2(t)\|_{L^2}&=\int_{t}^{\infty} e^{-a(p-1)s} s^{-n(p-1)/2} ds \|F(\widehat{\phi})\|_{L^2},
\end{align*}
which implies with Lemma \ref{ElemLem} that
\begin{align} \label{I2}
A t^{-n(p-1)/2} e^{-a(p-1)t} \leq \|I_2(t)\|_{L^2} \leq B t^{-n(p-1)/2} e^{-a(p-1)t} 
\end{align}
for some constants $A, B>0$.
We have by the factorization $e^{it\Delta}=M(t)D(t)\SCR{F} M(t)$ and Proposition \ref{PropM},
\begin{align} \label{r1}
\|R_1(t)\|_{L^2}&\leq \int_{t}^{\infty} e^{-a(p-1)s} \| F(e^{is\Delta}\phi)-F(M(s)D(s)\widehat{ \phi})\|_{L^2} ds  \notag
\\
&\lesssim \int_{t}^{\infty} e^{-a(p-1)s} \left(\|e^{is\Delta}\phi\|_{L^{\infty}}^{p-1}+\| M(s)D(s)\widehat{ \phi}  \|_{L^{\infty}}^{p-1}\right) \|e^{is\Delta}\phi- M(s)D(s)\widehat{ \phi}\|_{L^2} ds  \notag
\\
&\lesssim \int_{t}^{\infty} e^{-a(p-1)s} s^{-n(p-1)/2} \left(\|\phi\|_{L^{1}}^{p-1}+\|\widehat{ \phi}  \|_{L^{\infty}}^{p-1}\right) \|\left( M(s)-1\right) \phi\|_{L^2} ds  \notag
\\
&\lesssim \| \phi\|_{M^{1,1}}^{p-1} \int_{t}^{\infty} e^{-a(p-1)s} s^{-n(p-1)/2}\|\left( M(s)-1\right) \phi\|_{L^2} ds . 
\end{align}
By Lemma \ref{ElemLem} and applying the fact $\displaystyle\lim_{s\to\infty}\|(M(s)-1)\phi\|_{L^2}=0$ to (\ref{r1}), there exists $T>0$ such that
\begin{align} \label{R1}
\|R_1(t)\|_{L^2}&\leq \frac{A}{3} t^{-n(p-1)/2} e^{-a(p-1)t}
\end{align}
for $t>T$.
We obtain by Proposition \ref{PropM} and (\ref{Asmp1})
\begin{align} \label{r2}
\|R_2(t)\|_{L^2}&\leq C \int_{t}^{\infty} e^{-a(p-1)s} \left(\| v(s) \|_{L^{\infty}}^{p-1}+ \|e^{is\Delta}\phi\|_{L^{\infty}}^{p-1}\right) \|v(s)-e^{is\Delta}\phi\|_{L^2} ds \notag
\\
&\leq C \left(1 + \|\phi\|_{M^{1,1}}^{p-1}\right)   \int_{t}^{\infty} e^{-a(p-1)s} \left(e^{a(p-1)s} e^{-\frac{ap+\e}{2p-1}(p-1)s} +s^{n(p-1)/2}\right) \|v(s)-e^{is\Delta}\phi\|_{L^2} ds  \notag
\\
&\leq C \int_{t}^{\infty} e^{-\frac{ap+\e}{2p-1}(p-1)s} \|v(s)-e^{is\Delta}\phi\|_{L^2} ds.
\end{align}
Here by (\ref{Asmp1}) we have 
\begin{align} 
\|v(s)-e^{is\Delta}\phi\|_{L^2}&\leq \int_{s}^{\infty} e^{-a(p-1)\tau} \|F(v(\tau))\|_{L^2} d\tau  \notag
\\
&\leq \int_{s}^{\infty} e^{-a(p-1)\tau} \|v(\tau)\|_{M^{1,1}}^{p} d\tau  \notag
\\
&\leq C  \int_{s}^{\infty} e^{-a(p-1)\tau} e^{ap\tau} e^{-\frac{ap+\e}{2p-1}p\tau} d\tau  \notag
\\
&\leq C e^{as-\frac{ap+\e}{2p-1}ps},  \notag
\end{align}
which implies with (\ref{r2}) that
\begin{align} \label{R2}
\|R_2(t)\|_{L^2}&\leq C \int_{t}^{\infty} e^{-\frac{ap+\e}{2p-1}(p-1)s} e^{as-\frac{ap+\e}{2p-1}ps} ds \notag
\\
&=C \int_{t}^{\infty} e^{-a(p-1)s-\e s} ds \notag
\\
&\leq C e^{-a(p-1)t} e^{-\e t}.
\end{align}
We obtain
\begin{align} \label{r31}
\|R_3(t)\|_{L^2}&\leq \int_{t}^{\infty} e^{-a(p-1)s} s^{-n(p-1)/2} \| (M(t)M(-s)-1) \SCR{F}^{-1} F(\widehat{\phi})\|_{L^2} ds.  
\end{align}
Since $\SCR{F}^{-1} F(\widehat{\phi})$ is in $L^2$ by Proposition \ref{PropM}, there exists $\varphi\in\CAL{S}$ such that $\| \SCR{F}^{-1} F(\widehat{\phi})-\varphi\|_{L^2}< A/(6 C_2)$ with $C_2$ being as in Lemma \ref{ElemLem} by a density, which yields that
\begin{align} \label{r32}
&\| (M(t)M(-s)-1) \SCR{F}^{-1} F(\widehat{\phi})\|_{L^2}  \notag
\\
&\leq \| (M(t)M(-s)-1) \varphi\|_{L^2}+\left\| \left(M(t)M(-s)-1\right) \left(\SCR{F}^{-1}  \notag F(\widehat{\phi})-\varphi\right)\right\|_{L^2}
\\
&\leq \left(\frac{1}{t}-\frac{1}{s}\right) \|\varphi\|_{\SCR{F}H^2}+2 \| \SCR{F}^{-1} F(\widehat{\phi})-\varphi\|_{L^2}  \notag
\\
&\leq t^{-1} \|\varphi\|_{\SCR{F}H^2}+A/(3 C_2).
\end{align}
Combining (\ref{r31}) and (\ref{r32}), we derive 
\begin{align} \label{R3}
\|R_3(t)\|_{L^2}&\leq \left( t^{-1} \|\varphi\|_{\SCR{F}H^2}+ \frac{A}{3 C_2} \right)\int_{t}^{\infty} e^{-a(p-1)s} s^{-n(p-1)/2}ds \notag
\\
&\leq C_2 \|\varphi\|_{\SCR{F}H^2} t^{-n(p-1)/2-1} e^{-a(p-1)t} + \frac{A}{3} t^{-n(p-1)/2} e^{-a(p-1)t}.
\end{align}
Therefore we attain by (\ref{decomp1}) and (\ref{decomp2})
\begin{align*}
\|I_2(t)\|_{L^2}-\sum_{j=1,2,3} \|R_j\|_{L^2} \leq \|v(t)-e^{it \Delta}\phi\|_{L^2}\leq \|I_2(t)\|_{L^2}+\sum_{j=1,2,3} \|R_j\|_{L^2},
\end{align*}
which implies with (\ref{I2}), (\ref{R1}), (\ref{R2}) and (\ref{R3}) that
\begin{align*}
C_1 t^{-n(p-1)/2}e^{-a(p-1)t} \leq \|v(t)-e^{it \Delta}\phi\|_{L^2}\leq C_2 t^{-n(p-1)/2}e^{-a(p-1)t}.
\end{align*}
This is the desired result.
%%%%%%%%%%%%%%%%%%%%%%%%%%%%%%%%%%%%%%%%%%%%%%%%%%%%%%%%%%%%%%%%%%%%%%%%%%%%%%%%%%%%%%%%%%
%%%%%%%%%%%%%%%%%%%%%%%%%%%%%%%%%%%%%%%%%%%%%%%%%%%%%%%%%%%%%%%%%%%%%%%%%%%%%%%%%%%%%%%%%%
%%%%%%%%%%%%%%%%%%%%%%%%%%%%%%%%%%%%%%%%%%%%%%%%%%%%%%%%%%%%%%%%%%%%%%%%%%%%%%%%%%%%%%%%%%
%%%%%%%%%%%%%%%%%%%%%%%%%%%%%%%%%%%%%%%%%%%%%%%%%%%%%%%%%%%%%%%%%%%%%%%%%%%%%%%%%%%%%%%%%%
%%%%%%%%%%%%%%%%%%%%%%%%%%%%%%%%%%%%%%%%%%%%%%%%%%%%%%%%%%%%%%%%%%%%%%%%%%%%%%%%%%%%%%%%%%
%%%%%%%%%%%%%%%%%%%%%%%%%%%%%%%%%%%%%%%%%%%%%%%%%%%%%%%%%%%%%%%%%%%%%%%%%%%%%%%%%%%%%%%%%%
%%%%%%%%%%%%%%%%%%%%%%%%%%%%%%%%%%%%%%%%%%%%%%%%%%%%%%%%%%%%%%%%%%%%%%%%%%%%%%%%%%%%%%%%%%
\section{Proof of Theorem \ref{thm1}}
We first prove the existence of a scattering state $\phi\in \Sigma$. 
\begin{Prop} \label{Cauchy2}
For the solution $u\in C([0,\infty); \Sigma)$ to (\ref{CP1}), there exists a unique $\phi\in \Sigma$ such that
\begin{align*}
\lim_{t\to\infty}\|u(t)-e^{-at}e^{it\Delta}\phi \|_{\Sigma}=0.
\end{align*}
\end{Prop}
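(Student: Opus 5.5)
We follow the scheme of Proposition \ref{Cauchy1}: we show that $\{e^{at}e^{-it\Delta}u(t)\}_{t>0}$ is Cauchy in $\Sigma$ as $t\to\infty$. Then we define $\phi$ as its limit and transfer the convergence back to $u(t)-e^{-at}e^{it\Delta}\phi$. Set $v(t)=e^{at}u(t)$, so that $v$ solves (\ref{inteq}). For $t>t'$ we have
\begin{align*}
e^{-it\Delta}v(t)-e^{-it'\Delta}v(t')=-i\int_{t'}^{t}e^{-a(p-1)s}e^{-is\Delta}F(v(s))\,ds .
\end{align*}
It therefore suffices to show $\int_{T}^{\infty}e^{-a(p-1)s}\|e^{-is\Delta}F(v(s))\|_{\Sigma}\,ds<\infty$.

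For the $H^{[n/2]+1}$ part we use unitarity of $e^{-is\Delta}$ on $H^{[n/2]+1}$ together with Proposition \ref{PropKP}:
\begin{align*}
\|F(v(s))\|_{H^{[n/2]+1}}\lesssim \|v(s)\|_{L^\infty}^{p-1}\|v(s)\|_{H^{[n/2]+1}}.
\end{align*}
By (\ref{Asmp2}), $\|v(s)\|_{H^{[n/2]+1}}=\|e^{-is\Delta}v(s)\|_{H^{[n/2]+1}}\lesssim 1$. For $L^\infty$ we use the Sobolev embedding $H^{[n/2]+1}\hookrightarrow L^\infty$, which gives $\|v(s)\|_{L^\infty}\lesssim 1$. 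This crude bound is enough, since it leaves the integrand $\lesssim e^{-a(p-1)s}$, which is integrable. If the decay $\langle s\rangle^{-n/2}$ is wanted, one can instead use the factorization $e^{is\Delta}=M(s)D(s)\SCR{F}M(s)$: the bound $\|\SCR{F}g\|_{L^\infty}\lesssim\|g\|_{L^1}\lesssim\|g\|_{\SCR{F}H^{[n/2]+1}}$ (valid since $[n/2]+1>n/2$) applied to $g=M(s)e^{-is\Delta}v(s)$ gives $\|v(s)\|_{L^\infty}\lesssim s^{-n/2}\|e^{-is\Delta}v(s)\|_{\Sigma}$.

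The weighted part is the main point. By definition, $\|e^{-is\Delta}F(v(s))\|_{\SCR{F}H^{[n/2]+1}}$ is comparable to $\|F(v)\|_{L^2}+\sum_{|\alpha|=[n/2]+1}\|J(s)^{\alpha}F(v(s))\|_{L^2}$, where $J(s)=x+2is\nabla=M(s)(2is\nabla)M(-s)$. This is the same identity used in the proof of Proposition \ref{SDGE2}. Gauge invariance gives $F(v)=M(s)F(M(-s)v)$, and hence
\begin{align*}
\|J(s)^{\alpha}F(v)\|_{L^2}=(2s)^{|\alpha|}\|\partial^{\alpha}F(M(-s)v)\|_{L^2}.
\end{align*}
Applying Proposition \ref{PropKP} to $w=M(-s)v$, and using $|w|=|v|$, we obtain
\begin{align*}
\|J(s)^{\alpha}F(v)\|_{L^2}\lesssim \|v\|_{L^\infty}^{p-1}\Big(\|v\|_{L^2}+\sum_{|\beta|=[n/2]+1}\|J(s)^{\beta}v\|_{L^2}\Big)\lesssim \|v\|_{L^\infty}^{p-1}\|e^{-is\Delta}v(s)\|_{\Sigma}.
\end{align*}
Here $\|\partial^{\beta}w\|_{L^2}$ was converted back to $(2s)^{-|\beta|}\|J(s)^\beta v\|_{L^2}$, and the two powers of $2s$ cancel.

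Combining the two parts with (\ref{Asmp2}) gives
\begin{align*}
e^{-a(p-1)s}\|e^{-is\Delta}F(v(s))\|_{\Sigma}\lesssim e^{-a(p-1)s},
\end{align*}
which is integrable on $[T,\infty)$. Hence the family is Cauchy in $\Sigma$, and we let $\phi\in\Sigma$ be its limit. Uniqueness is immediate from uniqueness of limits.

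Finally,
\begin{align*}
u(t)-e^{-at}e^{it\Delta}\phi=e^{-at}e^{it\Delta}\big(e^{-it\Delta}v(t)-\phi\big).
\end{align*}
Here $e^{it\Delta}$ is unitary on $H^{[n/2]+1}$. On $\SCR{F}H^{[n/2]+1}$ it has norm $\lesssim\langle t\rangle^{[n/2]+1}$, because $x^{\alpha}e^{it\Delta}=e^{it\Delta}(x-2it\nabla)^{\alpha}$. The factor $e^{-at}$ beats this polynomial growth. Since $\|e^{-it\Delta}v(t)-\phi\|_{\Sigma}$ is bounded (indeed tends to zero), the $\Sigma$-norm of $u(t)-e^{-at}e^{it\Delta}\phi$ tends to zero. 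The expected obstacle is the $J(s)$-commutation step, which is exactly where gauge invariance and Proposition \ref{PropKP} must be combined.
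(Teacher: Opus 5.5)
Your proof is correct and is essentially the paper's argument: the paper proves Proposition \ref{Cauchy2} by the Cauchy-sequence argument of Proposition \ref{Cauchy1} for $e^{at}e^{-it\Delta}u(t)$ in $\Sigma$, handling the weighted part as in \cite[Proposition 2.5]{Kita} through $J(s)=M(s)(2is\nabla)M(-s)$, gauge invariance and Proposition \ref{PropKP}, which is exactly what you do. Two minor points: first, with the inhomogeneous norm in Proposition \ref{PropKP} the powers of $2s$ cancel only in the top-order terms, leaving an extra $(2s)^{[n/2]+1}\|v\|_{L^\infty}^{p-1}\|v\|_{L^2}$; this is harmless thanks to $e^{-a(p-1)s}$, or can be avoided by using the homogeneous Gagliardo--Nirenberg form of the estimate. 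Second, your last paragraph only uses boundedness, so it shows the displayed limit holds for every $\phi\in\Sigma$; the uniqueness you actually prove, that of $\lim_{t\to\infty}e^{at}e^{-it\Delta}u(t)$, is the meaningful one and the form used later in the proof of Theorem \ref{thm1}.
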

The proof is analogous to that of Proposition \ref{Cauchy1} and \cite[Proposition 2.5]{Kita}. 

In order to prove (\ref{MainInq1}), we show the following estimates:
\begin{align}
&\|v(t)-e^{it\Delta}\phi\|_{H^1}\lesssim t^{-n(p-1)/2} e^{-a(p-1)t}, \label{Inq10}
\\
&\|v(t)-e^{it\Delta}\phi\|_{L^2}\gtrsim t^{-n(p-1)/2} e^{-a(p-1)t},  \label{Inq11}
\end{align}
where we note that $v(t)=e^{at}u(t)$.
We first estimate for (\ref{Inq10}).
We have by the dispersive estimate of $e^{it\Delta}$ and the Schwarz inequality
\begin{align*}
\|v(t)\|_{L^{\infty}}&=\|e^{it\Delta} e^{-it\Delta}v(t)\|_{L^{\infty}}
\\
&\lesssim |t|^{-n/2} \|e^{-it\Delta}v(t)\|_{L^1}
\\
&\lesssim |t|^{-n/2} \| \J{x}^{-[n/2]-1}\|_{L^2} \|\J{x}^{[n/2]+1}e^{-it\Delta}v(t)\|_{L^2}
\\
&\lesssim |t|^{-n/2}  \|e^{-it\Delta}v(t)\|_{\Sigma},
\end{align*}
which implies with the assumption (\ref{Asmp2}) that
\begin{align} \label{DispersiveEst}
\|v(t)\|_{L^{\infty}}\lesssim|t|^{-n/2}. 
\end{align}
The inequality (\ref{Inq10}) is obtained by Lemma \ref{ElemLem}, Proposition \ref{PropKP} and  (\ref{DispersiveEst}) as follows:
\begin{align*}
\|v(t)-e^{it\Delta}\phi\|_{H^1}&\leq \int_{t}^{\infty} e^{-a(p-1)s}\|F(v(s))\|_{H^{[n/2]+1}} ds
\\
&\lesssim \int_{t}^{\infty} e^{-a(p-1)s}\|v(s)\|_{L^\infty}^{p-1} \|v(s)\|_{H^{[n/2]+1}} ds
\\
&\lesssim \int_{t}^{\infty} e^{-a(p-1)s} s^{-n(p-1)/2} ds
\\
&\lesssim t^{-n(p-1)/2} e^{-a(p-1)t}.
\end{align*}
Second we show that (\ref{Inq11}) holds. For this purpose we employ the decomposition (\ref{decomp1}), (\ref{decomp2}) and prove the estimates (\ref{I2}), (\ref{R1}), (\ref{R2}) and (\ref{R3}) for $v(t), \phi\in \Sigma$. Replacing Proposition \ref{PropM} with Proposition \ref{PropKP}, we can show that these estimates also hold for $v(t), \phi\in \Sigma$.
Therefore we have (\ref{Inq11}), that is, the inequality (\ref{MainInq1}) follows.

\section*{Appendix}
In this section, we give the proof of Proposition \ref{Hougan}. 
\begin{proof}
We put $f(x)=\varphi(x)\sum_{k=1}^{\infty} k^{-3/2} e^{i k x_1}$, where $ \varphi \in \CAL{S}(\R^n) \setminus \{0\}$ with $\operatorname{supp} \hat{\varphi} \subset \{ \xi \in \R^n : |\xi| \le 1/4 \}$. 
It suffices to prove $f \in M^{1,1}(\R^n) \setminus H^{1}(\R^n)$.
First we show that $f \in M^{1,1}$. We have
\begin{align*}
    V_g f(x, \xi) &= \int \left( \sum_{k=1}^{\infty} \varphi(y) k^{-3/2} e^{i k y_1} \right) \overline{g(y - x)} e^{-i y \cdot \xi} \, dy 
\\
&=\sum_{k=1}^{\infty} k^{-3/2} V_{g}\varphi(x,\xi-ke_1),
\end{align*}
where $e_{1}=(1,0,\ldots,0)\in\R^n$.
Therefore, we obtain
\begin{align*}
    \|f\|_{M^{1,1}} &= \iint \left| \sum_{k=1}^{\infty} k^{-3/2} V_g \varphi(x, \xi - k e_1) \right| dxd\xi
\\
    &\le \sum_{k=1}^{\infty} k^{-3/2} \|\varphi\|_{M^{1,1}} \hspace{2mm}< \infty.
\end{align*}

Second we check that $f \notin H^1$ by contradiction.
Suppose that $f \in H^1$ and let $f_{N}(x):=\varphi(x) \sum_{k=1}^{N} k^{-3/2}e^{ikx_1}$. Then it follows from $\operatorname{supp} \hat{\varphi} \subset \{ \xi \in \R^n : |\xi| \le 1/4 \}$ that
\begin{align}
|\widehat{f_N}(\xi)|^2&=\left( \sum_{k=1}^{N} k^{-3/2} \widehat{\varphi}(\xi - k e_1) \right) \overline{\left( \sum_{j=1}^{N} j^{-3/2} \widehat{\varphi}(\xi - j e_1) \right)}  \notag
\\
&=\sum_{k=1}^{N} k^{-3} |\widehat{\varphi}(\xi - k e_1)|^2, \notag
\end{align}
 which yields
\begin{align} \label{eq100}
 \|\xi_1 \widehat{f_N}(\xi)\|_{L^2}^2
 &= \int_{\mathbb{R}^n} |\xi_1|^2 \sum_{k=1}^{N} k^{-3} |\widehat{\varphi}(\xi - k e_1)|^2 d\xi \notag
\\
&= \int_{\R^n}\xi_{1}^{2}|\widehat{\varphi}(\xi)|^2 d\xi  \sum_{k=1}^{N} k^{-3} +2 \int_{\R^n}\xi_1|\widehat{\varphi}(\xi)|^2 d\xi \sum_{k=1}^{N} k^{-2} +  \|\varphi\|_{L^2}^2  \sum_{k=1}^{N} k^{-1}.
\end{align}
The sequence of functions $\{\widehat{f_N}\}$ converges pointwise to $\widehat{f}$ and we have
 $\|\xi_1 \widehat{f_N}(\xi)\|_{L^2}^2 \to  \|\xi_1 \widehat{f}(\xi)\|_{L^2}^2$ as $N\to\infty$ by Lebesgue's monotone convergence theorem.
This limit is finite by the assumption of $f\in H^1$. On the other hand, right hand side of (\ref{eq100}) goes to infinity as $N\to \infty$, which is contradiction.
\end{proof}
%%%%%%%%%%%%%%%%%%%%%%%%%%%%%%%%%%%%%%%%%%%%%%%%%%%%%%%%%%%%%%%%%%%%%%%%%%%%%%%%%%%%
%%%%%%%%%%%%%%%%%%%%%%%%%%%%%%%%%%%%%%%%%%%%%%%%%%%%%%%%%%%%%%%%%%%%%%%%%%%%%%%%%%%%
%%%%%%%%%%%%%%%%%%%%%%%%%%%%%%%%%%%%%%%%%%%%%%%%%%%%%%%%%%%%%%%%%%%%%%%%%%%%%%%%%%%%%
%%%%%%%%%%%%%%%%%%%%%%%%%%%%%%%%  $6 参考文献  %%%%%%%%%%%%%%%%%%%%%%%%%%%%%%%%%%%%%%%%
%%%%%%%%%%%%%%%%%%%%%%%%%%%%%%%%%%%%%%%%%%%%%%%%%%%%%%%%%%%%%%%%%%%%%%%%%%%%%%%%%%%%%
%%%%%%%%%%%%%%%%%%%%%%%%%%%%%%%%%%%%%%%%%%%%%%%%%%%%%%%%%%%%%%%%%%%%%%%%%%%%%%%%%%%%%

\address{
Kodai Takagi\\
Department of Mathematics,\\
 Faculty of Science, \\
Tokyo University of Science,\\
Kagurazaka 1-3, Shinjuku-ku, \\
Tokyo 162-8601, Japan}
{1124517@ed.tus.ac.jp}

\address{
Shun Takizawa\\
Department of Mathematics,\\
 Faculty of Science, \\
Tokyo University of Science,\\
Kagurazaka 1-3, Shinjuku-ku, \\
Tokyo 162-8601, Japan}
{1123703@ed.tus.ac.jp}


\begin{thebibliography}{99} 
\bibitem{Aloui}{L.~Aloui, S.~Jbari and S.~Tayachi,}
\newblock{\em Asymptotic behavior and life-span estimates for the damped inhomogeneous nonlinear Schr\"{o}dinger equation,}
\newblock Evol. Equ. Control Theory,
{\bf 13} (2024), no.~4, 1126--1150.

\bibitem{Barab}{J. Barab,}
\newblock{\em Nonexistence of asymptotically free solutions for a nonlinear Schrodinger equation,}
\newblock J. Math. Phys.
{\bf 25} (1984), 3270--3273.

\bibitem{Benyi}{A.~B\'{e}nyi, K.~Gr\"{o}chenig, K.~A.~Okoudjou and L.~G.~Rogers,}
\newblock{\em Unimodular Fourier multipliers for modulation spaces,}
\newblock J. Funct. Anal.
{\bf 246} (2007), no.~2, 366--384.

\bibitem{Benyi-Okoudjou1}{A.~B\'{e}nyi and K.~A.~Okoudjou,}
\newblock{\em Local well-posedness of nonlinear dispersive equations on modulation spaces,}
\newblock  Bull. Lond. Math. Soc.
{\bf 41} (2009), 549--558.

\bibitem{Benyi-Okoudjou2}{A.~B\'{e}nyi and K.~A.~Okoudjou,}
\newblock{\em Modulation spaces---with applications to pseudodifferential operators and nonlinear Schr\"{o}dinger equations,}
\newblock Applied and Numerical Harmonic Analysis, Birkh\"{a}user/Springer, New York,
{\bf } (2020).

\bibitem{Bourgain}{J. Bourgain,}
\newblock{\em Global wellposedness of defocusing critical nonlinear Schr\"{o}dinger
equation in the radial case,}
\newblock J. Am. Math. Soc. 
{\bf 12} (1999), 145--171.

\bibitem{BGTV}{N. Burq, V. Georgiev, N. Tzvetkov, N. Visciglia,}
\newblock{\em $H^1$ scattering for mass-subcritical NLS with short-range nonlinearity and initial data in $\Sigma$,}
\newblock Ann. Henri Poincar\'{e}
{\bf 24} (2023), 1355--1376.

\bibitem{Dodson}{B. Dodson,}
\newblock{\em Global well-posedness and scattering for the defocusing, $L^2$-critical,
nonlinear Schr\"{o}dinger equation when $d=2$,}
\newblock Duke Math. J.
{\bf 165}  (2016), 3435--3516.

\bibitem{Fei1}{H.~G.~Feichtinger,}
\newblock{\em Banach spaces of distributions of Wiener's type and interpolation,}
\newblock Functional Analysis and Approximation, (Oberwolfach, 1980), 153--165,
{\bf 60} Birkhäuser Verlag, Basel-Boston, Mass., (1981).

\bibitem{Fei2}{H.~G.~Feichtinger,}
\newblock{\em On a new Segal algebra,}
\newblock Monatsh. Math.
{\bf 92} (1981), no.~4, 269--289.

\bibitem{Fei3}{H.~G.~Feichtinger,}
\newblock{\em Modulation spaces on locally compact abelian groups,}
\newblock Technical report, University of Vienna, (1983).

\bibitem{Fibich}{G. Fibich,}
\newblock{\em Self-focusing in the damped nonlinear Schr\"{o}dinger equation,}
\newblock SIAM J. Appl. Math.
{\bf 61} (2001), 1680--1705.

\bibitem{Ginibre-Velo}{J. Ginibre, G. Velo}
\newblock{\em Scattering theory in the energy space for a class of nonlinear
Schr\"{o}dinger equations,}
\newblock J. Math. Appl. Neuv. S\'{e}r
{\bf 64} (1985), 363--401.

\bibitem{Goldman}{M. V. Goldman, K. Rypdal, B. Hafizi,}
\newblock{\em Dimensionality and dissipation in Langmuir collapse,}
\newblock Phys. Fluids.
{\bf 23} (1980), 945--955.

\bibitem{Grochenig}{K.~Gr\"{o}chenig,}
\newblock{\em Foundations of Time-Frequency Analysis,}
\newblock Applied and Numerical Harmonic Analysis, Birkh\"{a}user Boston, Inc., Boston, MA,
{\bf } (2001).

\bibitem{Hamouda}{M.~Hamouda and M.~Majdoub,}
\newblock{\em Energy scattering for the unsteady damped nonlinear Schr\"{o}dinger equation,}
\newblock Mediterr. J. Math.
{\bf 22} (2025), no.~2, Paper No.~44, 17 pp.

\bibitem{Hayashi-Kawahara}{N.~Hayashi, Y.~Kawahara, and P.~I.~Naumkin,}
\newblock{\em Lower bounds of asymptotics in time of solutions to nonlinear Schr\"{o}dinger equations in 3D,}
\newblock Nonlinear Anal. 
{\bf 65} (2006), no.~7, 1394--1410.

\bibitem{Hayashi}{N.~Hayashi and P.~I.~Naumkin,}
\newblock{\em Asymptotics for large time of solutions to the nonlinear Schr\"{o}dinger and Hartree equations,}
\newblock  Amer. J. Math.
{\bf 120} (1998), no.~2, 369--389.

\bibitem{Inui}{T.~Inui,}
\newblock{\em Asymptotic behavior of the nonlinear damped Schr\"{o}dinger equation,}
\newblock Proc. Amer. Math. Soc.
{\bf 147} (2019), no.~2, 763--773.

\bibitem{TKato}{T. Kato,}
\newblock{\em The global Cauchy problems for the nonlinear dispersive equations on modulation spaces,}
\newblock J. Math. Anal. Appl.
{\bf 413} (2014), 821--840.

\bibitem{Kita}{N.~Kita,}
\newblock{\em Sharp $L^r$ asymptotics of the small solutions to the nonlinear Schr\"{o}dinger equations,}
\newblock Nonlinear Anal.
{\bf 52} (2003), no.~4, 1365--1377.

\bibitem{Kita-Ozawa}{N.~Kita and T.~Ozawa,}
\newblock{\em Sharp asymptotic behavior of solutions to nonlinear Schr\"{o}dinger equations with repulsive interactions,}
\newblock Commun. Contemp. Math. 
{\bf 7} (2005), no.~2, 167--176.

\bibitem{Kita-Shimomura}{N.~Kita and K.~Shimomura,}
\newblock{\em Large time behavior of solutions to Schr\"{o}dinger equations with a dissipative nonlinearity for arbitrarily large initial data,}
\newblock J. Math. Soc. Japan
{\bf 61} (2009), no.~1, 39--64.

\bibitem{Kita-Wada}{N.~Kita and T.~Wada,}
\newblock{\em Sharp asymptotic behavior of solutions to nonlinear Schr\"{o}dinger equations in one space dimension,}
\newblock Funkcial. Ekvac.
{\bf 45} (2002), no.~1, 53--69.

\bibitem{Mohamad}{D.~Mohamad,}
\newblock{\em Blow-up for the damped $L^2$-critical nonlinear Schr\"{o}dinger equation,}
\newblock Adv. Differential Equations
{\bf 17} (2012), no.~3--4, 337--367.

\bibitem{Nakanishi}{K. Nakanishi,}
\newblock{\em Energy scattering for nonlinear Klein--Gordon and Schr\"{o}dinger
equations in spatial dimensions 1 and 2,}
\newblock J. Funct. Anal. 
{\bf 169} (1999), 201--225.

\bibitem{Ohta-Todorova}{M.~Ohta and G.~Todorova,}
\newblock{\em Remarks on global existence and blowup for damped nonlinear Schr\"{o}dinger equations,}
\newblock Discrete Contin. Dyn. Syst.
{\bf 23} (2009), no.~4, 1313--1325.

\bibitem{Ozawa}{T. Ozawa,}
\newblock{\em Long range scattering for nonlinear Schrödinger equations in one space dimension,}
\newblock Comm. Math. Phys.
{\bf 139} (1991), no.~3, 479--493.

\bibitem{Sugimoto}{M.~Sugimoto,B.~Wang and R.~Zhang,}
\newblock{\em Local well-posedness for the Davey-Stewartson equation in a generalized Feichtinger algebra,}
\newblock J. Fourier Anal. Appl.
{\bf 21} (2015), no.~5, 1105--1129.

\bibitem{Toft}{J.~Toft,}
\newblock{\em Continuity properties for modulation spaces, with applications to pseudo-differential calculus.~I,}
\newblock J. Funct. Anal. 
{\bf 207} (2004), no.~2, 399--429.

\bibitem{MTsutsumi1}{M. Tsutsumi,}
\newblock{\em Nonexistence of global solutions to the Cauchy problem for the damped
nonlinear Schr\"{o}dinger equations,}
\newblock SIAM J. Math. Anal.
{\bf 15} (1984), no. 2, 357--366.

\bibitem{MTsutsumi2}{M. Tsutsumi,}
\newblock{\em On global solutions to the initial-boundary value problem for the damped
nonlinear Schr\"{o}dinger equations,}
\newblock J. Math. Anal. Appl.
{\bf 145} (1990), no. 2, 328--341.

\bibitem{YTsutsumi}{Y. Tsutsumi,}
\newblock{\em Scattering problem for nonlinear Schr\"{o}dinger equations,}
\newblock Henri Poincar\'{e} Phys.Th\'{e}or. 
{\bf 43} (1985), 321--347.

\bibitem{Tsutsumi-Yajima}{Y. Tsutsumi, K. Yajima,}
\newblock{\em The asymptotic behavior of nonlinear Schr\"{o}dinger equations,}
\newblock Bull. Amer. Math. Soc.
{\bf 11} (1984), no. 1, 186--188.

\bibitem{Wang}{B.~Wang and H.~Hudzik,}
\newblock{\em The global Cauchy problem for the NLS and NLKG with small rough data,}
\newblock J. Differential Equations
{\bf 232} (2007), no.~1, 36--73.
\end{thebibliography}
\end{document}